 
 \documentclass[12pt]{amsart}

 \usepackage{pifont,geometry}
 \usepackage{amsmath,amssymb,amsthm}
 \usepackage[latin1]{inputenc}
 \usepackage[english]{babel}
 
 \usepackage{algorithm,algpseudocode}
 \usepackage{graphicx}
 \usepackage{enumitem}
 
 \usepackage[x11names,rgb,table]{xcolor}
 \usepackage{tikz}
 \usetikzlibrary{arrows,shapes}

\usepackage{fancyhdr}
\pagestyle{fancy}
\thispagestyle{empty}
\fancyhead[LO]{}
\fancyhead[RE]{}

 \usepackage[pdftex,bookmarks,colorlinks]{hyperref}

\newtheorem{theorem}{Theorem}[section]
\newtheorem{corollary}[theorem]{Corollary}
\newtheorem{proposition}[theorem]{Proposition}
\newtheorem{lemma}[theorem]{Lemma}

\theoremstyle{definition}
\newtheorem{definition}[theorem]{Definition}
\newtheorem{example}[theorem]{Example}
\newtheorem{remark}[theorem]{Remark}
\newtheorem{notation}[theorem]{Notation}

\numberwithin{equation}{section}

\theoremstyle{theorem}
\newtheorem*{teo}{Theorem A}

 \newcommand{\FQ}{\mathbb{F}_{q^2}}




 \newcommand{\In}{\mathrm{In}}
 \newcommand{\LM}{\mathrm{LM}}
 \newcommand{\supp}{\mathrm{Supp}}
 \newcommand{\DIM}{\mathrm{dim}}
 
 
 \newcommand{\He}{\mathcal{H}}
 \newcommand{\xx}{\mathcal{X}}
 \newcommand{\y}{\mathcal{Y}}

 \newcommand{\cN}{\mathcal{N}}
 \newcommand{\B}{\mathcal{B}}
 \newcommand{\MM}{\mathcal{M}}
 \newcommand{\DD}{\mathcal{D}}
 
 
 \newcommand{\kk}{\kappa}
 
 \def\ecr{\color{black}}
 \def\bcR{\color{black}}

 
 \begin{document} 	
 	\title{Minimum-weight codewords of the Hermitian  codes are  supported on complete intersections}
 	
 	\author{Chiara Marcolla}
 	\address{\textnormal{Chiara Marcolla}. Dipartimento di Matematica dell'Universit\`{a} di Torino\\ 
 		Via Carlo Alberto 10, 
 		10123 Torino, Italy}
 	\email{{chiara.marcolla@gmail.com}}

 	\author{Margherita Roggero}
 	\address{\textnormal{Margherira Roggero}. Dipartimento di Matematica dell'Universit\`{a} di Torino\\ 
 		Via Carlo Alberto 10, 
 		10123 Torino, Italy}
 	\email{{margherita.roggero@unito.it}}

 	\keywords{Hermitian code, minimum-weight codeword, complete intersection}
 	\subjclass[2010]{11G20,11T71} 
 	
 	
\begin{abstract} Let $\He$ be the Hermitian curve defined over a finite field $\FQ$.  In this  paper we complete the geometrical characterization of the supports of  the   minimum-weight codewords of the algebraic-geometry codes over $\He$,  started in \cite{ChiaraMargherita-fase3e4-parmin}:
if $d$ is the distance of the code, the supports are all the sets of $d$ distinct $\FQ$-points on $\He$   complete    intersection  of  two curves defined by polynomials  with prescribed initial monomials w.r.t.  \texttt{DegRevLex}.

For most Hermitian codes, and especially for all those with distance  $d\geq q^2-q$ studied in  \cite{ChiaraMargherita-fase3e4-parmin},  one of the two curves is always the Hermitian curve $\He$ itself, while  if  $d<q$ the supports are  complete intersection of two curves none of  which can be $\He$. 

Finally,  for  some special codes among those with intermediate distance between $q$  and $ q^2-q$,  both possibilities occur. 
   
We provide simple and explicit  numerical  criteria   that allow to decide for each code what kind of supports its  minimum-weight codewords have and to obtain a parametric description of the  family (or the two families) of the supports. 
\end{abstract}

\maketitle


\section{Introduction}

Let $\FQ$ be the finite field with $q^2$ elements, $q$   a power of a prime. The \textit{Hermitian curve} $\He$ is the affine, plane curve over $\FQ$ defined  by the polynomial $x^{q+1}=y^q+y$. It is a smooth curve of genus $g=\frac{q^2-q}{2}$ with only one point at infinity.
The curve $\He$ is the most studied example of  \emph{maximal curve}, that is with  the  maximum number of $\FQ$-points allowed by the Hasse-Weil  bound \cite{CGC-alg-art-rucsti94}. For this reason it is well suited for the construction of algebraic-geometric codes.

After Stichtenoth's work \cite{stichtenoth1988note} several authors give significant contribution on this topic, we quote for instance  \bcR \cite{CGC-cod-book-hirschfeld2008algebraic, CGC-cd-book-AG_HB, kirfel1995minimum,yang1992true} \ecr regarding a generic description of Hermitian codes and their distance, \cite{lee2009list,lee2010algebraic,CGC-cd-prep-manumaxchiara12,o2000decoding} about the decoding of the Hermitian codes, \cite{CGC-cod-art-ballico2012geometry,CGC-cod-art-couvreur2012dual,CGC-cod-art-fontanari2011geometry, CGC-cd-art-marcolla2014hermitian,CGC-cd-art-marcolla2015small} on small weight codewords and \cite{barbero2000weight,heijnen1998generalized,munuera1999second, yang1994weight} regarding generalized Hamming weights. \bcR Other results concerning weight hierarchies of codes on Hermitian curve and the \textit{relative} generalized Hamming weights  can be found e.g. in \cite{ballico2016higher,geil2014relative,geil2000footprints, homma2009second,lee2015bounds}. Finally,  in \cite{homma2005toward,homma2006complete,homma2006two,homma2006twob,park2010minimum} we can found the distance of two-points Hermitian code. \ecr  \\

In this work we continue the geometrical description of the support of the minimum weight codewords of the Hermitian codes started in \cite{ChiaraMargherita-fase3e4-parmin}. 
We also refer to that paper for a more detailed historical introduction and bibliography.\\
Making reference to the  H{\o}holdt, van Lint and Pellikaan work  \cite{CGC-cd-book-AG_HB}, every Hermitian code can be identified by an integer $m$, so we denote it as $C_m$, and can be classified in \textit{four phases} depending on $m$ (see Remark \ref{4fasi}). 

Our main achievement concerns the set $\MM_m$ of  minimum weight codewords.
\begin{teo} 
For any Hermitian code $C_m$,  the support of every minimum-weight codeword is of either of the following type 
\begin{enumerate}
\item[i)]   complete intersection of $\He$ and a curve $\xx$ 
\item[ii)] complete  intersection of two curves $\xx $ and $\y$, neither of which can be $\He$. 
\end{enumerate} 
If  the distance of the code is lower than $q$,   all the supports are of type ii);  for those with distance at least  $q$  the supports are all of type i), except for some special code among those  with distance $\mu q$, with $\mu\leq q$, for which both types of support are present.  
\end{teo}

A proof for Theorem A applied to \bcR the codes with $m \geq 2q^2-2q-2$ (called codes of the  III and IV phase)  is given in \cite{ChiaraMargherita-fase3e4-parmin}.
In this paper we prove  this same result in the case $m\leq 2q^2-2q-3$ (I and II phase).  Moreover, we    present a necessary and sufficient condition for a divisor to be the support of a  codeword in $\MM_m$,  by an explicit description of the families of curves $\xx$ (and respectively $\xx$ and $\y$).  \ecr 

\bcR 
The tools we exploit are very close to the classical ones (Gr\"obner basis, sous-\'escalier, etc)  that  have  already been exploited in several papers  \bcR to bound the minimum distance or to establish determinate parameters of some codes, such as \cite{geil2008evaluation,geil2000footprints,geil2017bounding}.

However, we introduce  two apparently small modifications  in the base settings, since they  allow  to greatly simplify statements and  proofs  of our results.

\smallskip

The first modification concernes the way of labeling a code. When different integers $m$ can be associaed to the same code,  we choose to label it by a special one,  taking in account  its position with respect to the  \textit{gaps} of the numerical semi-group $\Lambda=\langle q,q+1\rangle$.  In this way, the distance $d(m)$ of the  code $C_m$  and the degree of the curve $\mathcal X$ (resp. of the curves $\mathcal X$ and $\mathcal Y$) can be  expressed in a simple way. For more details on this point we refer to Subsection \ref{subsecgaps}. and to Sections 3 and 4.

\smallskip

Another change concerns  the choice of the   term  ordering. Due to the problem we deal with, we must consider curves and their intersections in the affine plane. However, as well known, the intersection theory is simpler in the projective framework. When we study the intersection of the Hermitian curve  $\mathcal H$ with another curve $\mathcal X$, we first intersect their projective closures $\overline{\mathcal H}$ and $\overline{\mathcal X}$  and then exclude the points that lie on  the infinity line $L_\infty$.  In our setting there is one and only one monomial in the equation $f(x,y)$ of $\mathcal X$ that encodes both informations: the degree of $\overline{\mathcal X}$  (hence that of $\overline{\mathcal H}\cap \overline{\mathcal X}$)  and the degree of  $\overline{\mathcal H}\cap \overline{\mathcal X}\cap L_\infty$:  this monomial turns out to be the leading monomial of $f(x,y)$ with respect to the degree reverse graded term ordering $\mathtt{DegRevLex}$ with $y>x$. 
The choice of this term ordering, instead of the one adopted in other papers on this topic, is due to this useful property  (see Section \ref{sub.TermOrder}).

\smallskip

 The discussion about codes $C_m$  with $m\leq 2q^2-2q-3$   turned out to be more difficult with respect to that concerning   larger values of $m$,   just because of the presence of the gaps of $\Lambda$ to which correspond   gaps  for  the distances of the codes and different beaviour of the supports of minimum weight codewords.

Indeed, if $m\geq 2q^2-2q-2$ , the supports of minimum weight codewords are complete intersection divisors cut on $\He$  by a single family of curves, while if $m\leq 2q^2-2q-3$ there  are  some special values of  $m$, depending on the  distribution of the gaps,  for which  there are two different type of supports:  part of the minimum weight codewords   are  supported on complete intersection divisors $\He \cap \mathcal X$, and part   on complete intersection $\mathcal Y  \cap \mathcal X$ of two curves both different from $\He$.\\

Note that there is a partial overlap with known results as far as it  concerns the geometric description of minimum weight codewords. In particular, Marcolla, Pellegrini and Sala \cite{CGC-cd-art-marcolla2015small} find a geometric description for the codes with $m\leq q^2-2$, while Ballico and Ravagnani \cite{CGC-cod-art-ballico2012geometry} find it for some codes in the range  $m\leq 2q^2-2q-3 $. In Section \ref{sec.6} we analyze in detail similarities and differences between these two papers  and the present one. 
Here we only underline that the  methodologies   we introduce allow us to describe features of the codes $C_m$,  for a large range of values $m$, by a  formula and prove them with short proofs, avoiding case by case arguments.   The previously known results follow as special cases by  the general methods  we use for  the whole classification, so that no substantial advantage or simplification would be obtained by  focusing only on the new cases.
\ecr
\smallskip

We intend to further exploit the potentiality of  these methods to also analyze the support  of small weight codewords. Some results presented in this paper, as for instance those in Section \ref{sez.phaseII},  give information on this more general case.  We are confident  that, from this strong geometric characterization, also  the explicit computation of the weight distribution will follow.

The paper is organized as follows:
\begin{itemize}
\item[-] In Section \ref{Sec.pre} we recall some basic definitions  and we prove some preliminary results about  Hermitian codes $C_m$.
\item[-] In Section \ref{sez.phaseI} we consider the codes in the I phase, computing their distance and providing the geometric description of their minimum-weight codewords, according to the results obtained in \cite{CGC-cd-art-marcolla2015small}.
\item[-] In Section \ref{sez.phaseII} we state and prove Theorem \ref{basePERparole}, which gives information about  the small-weight codewords of $C_m$ in the II phase,  and 
in Theorem \ref{basePERparole2} we  prove the distance of  these  codes.
\item[-] In Section \ref{Sec.MinWord}  we focus on the  $\FQ$-divisors that  are the support of minimum weight codewords of codes $C_m$  in the II phase, proving the main result of the present work, that is, \textit{the support of every minimum-weight codeword of these codes is a  complete intersection of two plane curves} (Theorem~\ref{curvagenericamm} and Theorem~\ref{curvagenericarifatto}).
\item[-] Finally, in Section \ref{sec.6} we compare our results with those in \cite{CGC-cd-art-marcolla2015small, CGC-cod-art-ballico2012geometry} and in Section \ref{sec.con} we draw the conclusions.
\end{itemize}

\section{Generalities and Preliminary Results}\label{Sec.pre}

\subsection{General notations}

   In the paper we fix the following notations:
   \begin{enumerate}
   \item[(i)]  $q$ is a power of a prime integer $p$ and $\FQ$ will denote the finite field with $q^2$. 
   \item[(ii)]   $\Lambda$  is the semi-group generated by $q$ and $q+1$.  For every  integer in $ \Lambda$ we always consider its  unique writing $aq+b(q+1)$ with $a\leq q$.  Every positive integer that does not belong to $\Lambda$ is called a {\it gap} of the semi-group; 
   \item[(iii)]  $m$ is an integer number $0\leq m \leq  q^3+q^2-q-2$, but we will focus mainly on the values $q^2-1 \leq m \leq  2q^2-2q-3$. 
   \item[(iv)] $\delta(m):=m-q^2+q+2$.
\end{enumerate}

\subsection{The Hermitian curve}

Let $\FQ$ be the finite field with $q^2$ elements, where $q$ is a power of a prime and let $K$ be its algebraic closure.
For any ideal $I$ in the polynomial ring $A:=\FQ[x,y]$ 
we denote by $\mathcal{V}(I)$  the corresponding  variety in $\mathbb A_K^2$. If $g_1,\ldots,g_s\in\FQ[x,y]$, we denote by $  \langle g_1,\ldots,g_s\rangle  $ the ideal they generate.\\

The \textit{Hermitian curve} $\mathcal H$ is the curve in the  affine plane $\mathbb{A}_K^2$ defined by the polynomial $H:=x^{q+1}-y^q-y$. We will denote by  $I_\He$  its defining  ideal $\langle H\rangle$ in $A$ and by   $A_\He$ the coordinate ring $A/I_\He$ of $\He$.   
\bcR  The  degree of  $\He$ is $q+1$ and its  projective closure $\overline \He$ in $\mathbb P^2_K$  is smooth, so that  the arithmetic genus and the geometric genus of $\He$ is $g:=\frac{q^2-q}{2}$.  
The number of closed  points  of $\He$ with coordinates in $\FQ$ ($\FQ$-points for short) is  $n:=q^3$:   we will always denote  them by  $P_1,\ldots,P_n$ and denote by  $E$  the zero-dimensional scheme of degree $n$ composed by them.  
The only point in  $\overline \He\setminus \He$ is $P_{\infty}=[0:0:1]$, so that $\overline \He$  has $q^3+1$
 $\FQ$-points    \cite{CGC-alg-art-rucsti94}. 
\ecr 

\begin{definition}  \label{defdivisori}  A  \textit{$\FQ$-divisor} over the Hermitian curve is a divisor $D=\sum_{i=1}^\delta Q_{i}$ where the $Q_{i}$'s are pairwise   distinct $\FQ$-points of $\He$. We will denote by   $\vert D\vert$   the degree $\delta $ of $D$. We can also write $D=\{Q_1, \dots, Q_\delta\}$; in particular,  $E=\{P_1, \dots, P_n\}$ and $D$ is  a  $\FQ$-divisor on $\He$ if and only if   $D\subseteq E$.  \\
We denote by $I_D$ the ideal generated by all polynomials in $A$  vanishing on $D$ and by $A_D $ the quotient ring $ A/I_D$.
\end{definition}

Observe that in the above notations, we have  $A_E=A/\langle H,x^{q^2}-x, y^{q^2}-y\rangle$; 
 moreover,   $D$    is a  $\FQ$-divisor on $\He$ if and only if   $A_{D}$ is a quotient of  $A_{E}$.

\subsection{\bcR Hermitain codes and term ordering \ecr}\label{sub.TermOrder}

  A linear  code $C$ over $\FQ$ is a linear subspace of $\FQ^n$ for a suitable $n$, called length  of $C$.   
The  \textit{dual code} $C^{\perp}$ of $C$ is formed by all vectors $\mathbf v$ such that $G\mathbf v^T=0$, where $G$ is the \textit{generator matrix} of $C$. Every generator matrix of $C^\perp$ is called a \textit{parity-check matrix} of the code $C$.  The \textit{weight} of a codeword $\mathbf c=(c_1, \dots,c_n)\in C$ is   the number of $c_i$ that are different from $0$ and its {\it support}   $\supp(\mathbf c)$  is the set of indexes  corresponding to the non-zero entries. The {\it distance} $d$ of a linear code $C$ is the minimum weight of its non-zero codewords. 
\\

Let us consider \textit{evaluation map} 
\begin{equation}\label{eval}
  \begin{array}{rccl}
    \phi_E :  & A_E & \longrightarrow & (\FQ)^n\\
    & f & \longmapsto & (f(P_1),\dots,f(P_n)).
  \end{array}
\end{equation}
For every given  linear subspace $V$ of $A_E $ and  ordered set of generators  $ \langle f_1, \dots, f_s\rangle$ of $V$   we can define  the  code  $C(V):=\phi_E(V)$, whose  \textit{generator matrix}   is the $s\times n$ matrix with entries  $ f_i(P_j)$, and its dual code $C(V )^{\perp}$. Note that the dual code depends on $V$, but does not depend on the chosen set of generators.

    The  Hermitian codes we consider here are  dual codes $C(V )^{\perp}$ for  the special linear subspaces $V$ defined in the following way.

\begin{definition}
 Let us fix  the weight vector $w:=[q, q+1]$ and associate to every monomial $x^ry^s$ the  $w$-degree $w(x^ry^s)=rq+s(q+1)$. 
For every  positive integer $m$,  we denote by $V_m$ the subspace of $A_E$ generated by the (classes of)  monomials of $w$-degree less than $m+1$. We will denote by $C_m$ and call   {\it{Hermitian code}} the dual code   $C(V_m )^{\perp}$.
\end{definition}

Therefore, the  length of  an Hermitian code  is the number $n$ of $\FQ$-points  in $\He$ and  the entries of a codeword $\mathbf c$ are labeled after these points, so that  we can identify the support of $\mathbf c$  with   a divisor $D\subset E$   of the points   corresponding  to the   non-zero  entries  of~$\mathbf c$.  

By definition,  for every $m<m'$ we have $C_m \supseteq C_{m'}$.

\begin{notation}  For every Hermitian code $C_m$ we will denote by $\DD_m$ the set of divisors on $\He$ that are support of codewords of $C_m$; moreover, we will denote by  $\MM_m$ the set of those that are the support of minimum-weight codewords of $C_m$.
\end{notation}

\medskip

\bcR We recall that for any given term ordering $\prec$ in a polynomial ring $S$  every polynomial
$F\in S$ has a unique \textit{leading monomial} $\LM_\prec(F)$, that is the $\prec$-largest monomial which occurs with nonzero coefficient in the expansion of $F$. If $I$ is an ideal in $S$,
 the \textit{initial ideal} $\In_\prec(I)$ as the ideal generated by the leading monomials of all the polynomials in $I$, that is 
$$\In_\prec(I) =\langle LM_\prec(F) \mid F\in I \rangle.$$\ecr
\bcR
Finally the  \textit{sous-\'escalier} $\cN(I)$ of $\In_\prec(I)$ is the set of all monomials that do not belong to $\In_\prec(I)$. Note that the quotients modulo 
the two ideals $I$ and  $\In_\prec(I)$ share the  same monomial basis $\cN(I)$.\\

\ecr

Usually the classes in $A_E$  of monomials with  $w$-degree less than $m+1$ are not linearly independent and, in order to obtain a  parity-check matrix with maximal rank,  it is convenient to   select a  suitable subset $\B$  which form a basis for $V_m$. To choose this basis we fix the \bcR \textit{graded reverse lexicographic ordering} $\mathtt{DegRevLex}$ with $y> x$, denoted by $\prec$.   We recall that the $\mathtt{DegRevLex}$ in the  polynomial ring $k[z_1, \dots, z_n]$,  with variables ordered as   $z_1< \dots <z_n$ is defined in this way: $z_1^{r_1}\cdots z_n^{r_n} \prec z_1^{s_1}\cdots z_n^{s_n}$, if ether $\Sigma r_i<\Sigma s_i$ or $\Sigma r_i=\Sigma s_i$ and  $r_i > s_i$ with $i$ minimum index   such that $r_i\neq s_i$. 
\ecr

\noindent \begin{tabular}{ccc}
\hspace{-0.2cm}\noindent\begin{minipage}{10cm}
Let $\B$ be the following set of monomials
$$
\{x^ry^s \mid 0 \leq r\leq q ,\ s\leq q^2-q-1\}\cup \{ y^s \mid  q^2-q\leq s\leq q^2-1\}. 
$$
\noindent Their classes are linearly independent in $A_E$ since they are the elements of the sous-\'escalier $\cN(I_E)$ of the    initial ideal of $I_E$ w.r.t.  $\prec$; indeed   
$\In_\prec(I_E)=\langle x^{q+1}, xy^{q^2-q},y^{q^2} \rangle$. 
\noindent With this choice,   the set of monomials  $\B_m:=\B\cap V_m$ is a basis  of $V_m$ for every $m\geq 0$. 
\end{minipage}

&
$$\quad$$
&
\begin{minipage}{4cm}
\includegraphics[width=4cm]{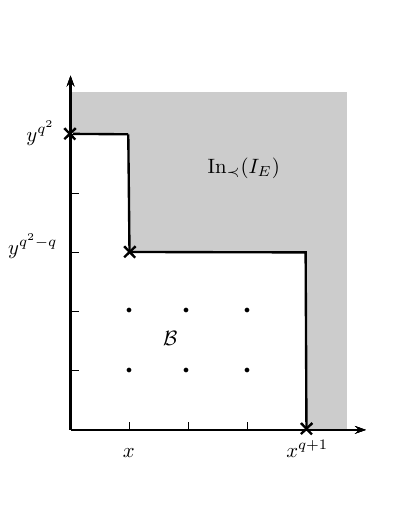}
\end{minipage}
\end{tabular}
\\


\bcR 
 
%

 \bcR
 Note that the term ordering we usually find in  literature about Hermitian codes is the weighed term ordering  $<_w$ associated to the weight vector  $w=[q ,q+1]$. However, the leading monomial  w.r.t. $<_w$  of  $H$ is the monomial $y^q$, whose degree is different from the one of $H$. More generally,   the degree of a scheme defined by an ideal $I$ cannot be computed simply considering the initial ideal of $I$  with respect to  $<_w$. 

 Furthermore, when 
 we deal with  curves in the affine plane and also with their   closure   in the projective plane,  the term ordering  $\mathtt{DegRevLex}$ does not substantially modify the leading monomial of a polynomial when we homogenize it, hence it is the more convenient  to compute  monomial bases for the coordinate ring of  an affine  scheme and that of its projective closure  through  initial ideals and   sous-escaliers.

\ecr

\begin{remark}\label{def:codice}
	For every $m\geq 0$ the  \textit{Hermitian code} $C_m$ is the code in $\FQ^n$ with parity-check matrix
	$ \phi_{E}(\B_m)$ where $\phi_{E}$ is the evaluation map (\ref{eval}) at the points of $E$. \\
	For every divisor $D$ on $\He$ we will denote by $V_{m,D}$   the image  of $V_m$ in $A_D$.  
\end{remark}

\subsection{The semi-group  $\Lambda$ and the distance of a Hermitian  code $C_m$}\label{subsecgaps}

   The semi-group $\Lambda$  is related to many features  of the Hermitian curve;  for instance  the number of gaps in $\Lambda$ is equal to  the genus $g=\frac{q^2-q}{2}$ of $\He$.  The semigroup  $\Lambda$ and its gaps play a central role  also in the study of the Hermitian codes.  We  now  collect   some  features of the gaps and some interesting relation that concern the subset   $\Lambda_\B$ of $\Lambda$ of the $w$-weights of the monomial in $\mathcal B$.

\begin{remark} \label{qualim}   
\begin{enumerate}
\item\label{qualim1gaps}  There are $q-1$      \lq\lq segments\rq\rq of gaps of decreasing length; more precisely, for every $h=1, \dots, q-1 $ there are  the following $(q-h)$ gaps: 
\begin{equation}\label{listagaps}  (h-1)(q+1)+1, \quad  \dots \quad, (h-1)(q+1)+t, \quad      \dots\quad  , (h-1)(q+1)+(q-h)
\end{equation}
with $1\leq t \leq q-h$.
 \bcR Hence, for every $h=1, \dots, q-2$, \ecr between to segments of gaps we find the following segment of elements of $\Lambda$:
    \begin{equation}\label{listanogaps} 
    hq=w(x^h), \quad   \dots\quad ,  (h-i)q+i(q+1)=w(x^{h-i}y^i), \quad \dots \quad , h(q+1)=w(y^h).
    \end{equation}
\bcR After the last gap every integer  can be written in a unique way as $$(q+t-1)q+k(q+1)=w(x^{q+t-1}y^k) \mbox{ with } t\geq 0 \mbox{ and } 0\leq k<q.$$\ecr
\item\label{qualim1} There is a $1-1$ correspondence between Hermitian codes and elements in $\Lambda_\B$. In fact,  if $m+1\notin \Lambda_\B$, then $\B_m=\B_{m+1}$, so $C_m=C_{m+1}$, while if  $m+1\in \Lambda_\B$ the codes $C_m$ and $C_{m+1}$ are different.  For this reason  in the following  {\bf we will always label  an Hermitian  code   by an integer $m$ such that} $m+1\in \Lambda_\B$.

\item\label{qualim2} The  $w$-weights of the monomials in $\B$  are pairwise different, so that there is a $1-1$ correspondence between   $\mathcal B$ and   $\Lambda_{\mathcal B}$. Moreover, the monomials in  $\B$  are ordered by  $\prec$ in the same way as by  the  weighed term order with weight-vector $w$,  namely they are ordered as   the corresponding elements in $\Lambda_{\mathcal B}$ are ordered by $<$.
\item\label{qualim3} Due to the previous items,  for every code $C_m$ we can find in $\mathcal B $ a unique  monomial whose $w$-weight is $m+1$.
\end{enumerate}
\end{remark}


In the previous paper \cite{ChiaraMargherita-fase3e4-parmin} the authors prove that the distance $d(m)$ of the Hermitian codes $C_m$ with $m\geq 2q^2-2q-2$ is given  by the integer   $ \delta(m):=  m-(q-2)(q+1)$. This formula  agrees with that proved by  \cite{CGC-cd-book-AG_HB,stichtenoth1988note,yang1992true}, taking in account our convention on the choice of the integer $m$ labeling the code $C_m$ given in Remark \ref{qualim} \eqref{qualim1}.\\

\bcR  The following  example shows that  $d(m)$ can be different from $\delta(m)$ if either  $m\notin \Lambda _{\mathcal  B} $ or for some  $m$ in the range $q^2-q-1 \leq m \leq 2q^2-2q-3$.  \ecr 
 
 \begin{example}\label{q4fase2}
For   $q=4$  let us consider the range  $10\leq m\leq 21$, containing the first integers $m$ such that $\delta(m)$ in not negative.     As already observed,  $C_{10}=C_{11}$ since $10+1 \notin \Lambda_\B$; hence $d(10)=d(11)$, while $\delta(10)\neq \delta(11)$.\\  
 Moreover, by a direct computation (or using the known formula) we   see that the distance of the Hermitian code $C_{11}$  is $d(11)=4$, while $\delta(10)=0$  and $\delta(11)=1$.\\
  More generally,  for  $m$ in the above range,  we have  $d(m)=  \delta(m)$ if  $m= 14, 15, 18, 19, 20$ and $d(m)\neq \delta(m)$  if    $m=10, 11, 12, 13, 16,  17,  21 $.  We underline that this second list contains precisely all the integers  in the considered range such that  either $m+1\notin \Lambda_{\B} $ ($m=10$)  or $\delta(m) \notin \Lambda_\B $. We will examine  this case again in Example \ref{esempiocompleto}. 
\end{example}
  \bcR 
 We now introduce a new  function   of $m$ (constructed starting from the function $\delta$);  we will prove  that it coincides with the distance $d(m)$ of every code  $C_m$ with $m\geq q^2-q-1$ (see Remark~\ref{rem.dist.I} and Theorem~\ref{basePERparole2}).
 \ecr

 \begin{notation} In the following, \bcR given $m$ \ecr we will denote by  $\widetilde{m}$ the minimum integer $m' \geq m$ such that both  $m'+1\in \Lambda_{\B}$ and $\delta(m')\in \Lambda$; moreover  $\widetilde \delta_m$ will denote the integer $\delta  (\widetilde m)=\widetilde m-(q-2)(q+1)$.
\end{notation}
 
 \bcR
 \begin{example}
 	Let us consider $q=4$ and $m=10$. Then $\widetilde m=14$ is the minimum integer $m'\geq 10$ such that $m'+1\in \Lambda_{\B}$ and $\delta(m')=m'-10\in \Lambda$. In fact, if $m'=10$ we have that $10+1\not\in \Lambda_{\B}$ and when  $m'=11,12,13$  then $m'+1\in \Lambda_{\B}$ but $\delta(m')=1,2,3\not\in \Lambda$. Whereas when $m'=14$, we have both $15\in \Lambda_{\B}$ and $\delta(m')=4\in \Lambda$, so $\widetilde m=14$.\\
 	What will see (Example \ref{solito3}) is that the code $C_{10}=C_{11}$ has distance   $\widetilde \delta_m=4$. 
 \end{example}

\bcR
Note that with our convention  about the integer   that  labels a code $C_m$,  there always exists a Hermitian code labeled after  $\widetilde m$ and that   $\widetilde \delta_m$  is  
an element in  $\Lambda_\B$. Moreover, for every $m\geq  2q^2-2q-2$ we have $m=\widetilde m$, so that  $d(m) $  agrees with $\delta  (\widetilde m)$.

\ecr
\begin{remark}\label{4fasi}
  \bcR In this paper we adopt the following classification.\ecr

\begin{enumerate}
\item[I  phase:] $m\leq q^2-2$.  For some of the integers $m$  in this range,     $m+1$ is a gap and we do not  label a code after them.  Moreover, also for some of  the remaining ones, $\delta(m) $ is a gap. 
\item[II phase:]  $q^2-1 \le m\le 2q^2-2q-3 \bcR= 4g-3\ecr$.   All the  integers $m+1$ are in $\Lambda_\mathcal B$, but some of   the $\delta(m)$ are gaps; then $m=\widetilde{m}$ if and only if $\delta(m)\in \Lambda$. 
\item[III   phase:] $\bcR 4g-2 =\ecr 2q^2-2q-2\le m \le q^3-2 \bcR=n-2\ecr$. Both $m+1\in \Lambda_\mathcal B$ and $\delta(m)\in \Lambda$, so that we always have $m=\widetilde{m}$. 
\item[IV  phase:]  $\bcR n-1\ecr = q^3 -1\le m \le q^3 +q^2-q-2 \bcR=n+2g-2\ecr$. Neither $m$, nor $\delta(m)$ are gaps, but for some   $m$ we have $m+1\notin \Lambda_{\mathcal B}$; then $m=\widetilde{m}$ if and only if $m+1\in \Lambda_{\mathcal B}$.
\end{enumerate}

\bcR 
 {\bf In the following  we fix an integer $m$   such that  $ m\le 2q^2-2q-3$.}

\medskip

This range corresponds to the I and the II phase.  Among such values of $m$  we find all the gaps of $\Lambda$;  moreover,  also  $\delta (m)$  can be negative or a gap for $\Lambda$. In correspondence  with  the gaps for $\delta (m)$ there are  \lq\lq segments\rq\rq of integers  $m$   that share  the same value $\widetilde m$.  More precisely, looking at \eqref{listagaps} and \eqref{listanogaps}, we find  for each  $h \in \{1, \dots, q-1 \}$   the following  segment   
\begin{equation}\label{listagap2}  (q+h-3)(q+1)+1,  \  \dots \  (q+h-3)(q+1)+t, \      \dots\  , (q+h-3)(q+1)+(q-h). \end{equation}
Then  $\widetilde m$ is for all of them   the first integer after the segment, namely $\widetilde m=h q+(q-2)(q+1)$, so that:
\begin{equation}\label{listagap3} \left\{\begin{array}{l}\widetilde m=w(x^{h }y^{q-2})\\
 \widetilde m+1=w(x^{h -1}y^{q-1})\\
 \widetilde \delta_m=w(x^{h}).
\end{array}\right.\end{equation}
\end{remark}

\begin{example}\label{esempiocompleto} Let us consider again the case of Example \ref{q4fase2}: 
$q=4$  and 
 $11\leq m\leq 22.$  
In the following table we summarize the behavior of the  three integers $m$, $\widetilde m$ and $\widetilde\delta_m$; in black the   segments of decreasing length 3,2,1 of the  integers $\delta(m)$ that are gaps and the corresponding $m\neq \widetilde m$.

$$\scriptsize{\begin{array}{cccccccccccccc} m &  \bf{11}&\bf{12}&\bf{13}&14 &  15 &  \bf{16}&  \bf{17}& 18 &19&20&  \bf{21}& 22 \\ 
\widetilde{m} &   14& 14&   14&  14& 15 & 18 &  18 &  18 &19&20& 22 &22 \\
 \delta(m) &  \bf{1}&  \bf{2}&  \bf{3}& 4=w(x)& {5}&  \bf{6} &  \bf{7} &   8=w(x^2) & 9&10&   \bf{11} &12=w(x^3)\\
\widetilde\delta_m &  4&  4&  4& 4& 5 &  8 &  8 &   8& 9&10&   12 &12\end{array}}$$
Note that for $11\leq m \leq 14$ we are in the I phase, while the following values  $m\leq 21 $ correspond to the II phase and  $C_{22}$ is the first code in the III phase, already considered in \cite{ChiaraMargherita-fase3e4-parmin}.
\end{example}

  \bcR 
In order to find the distance of the code $C_m$ and mainly to describe the minimum weight codewords as complete intersections,  we now study the divisors cut on  $\He$  by another  curve.  Therefore, the  following  results concerning  these intersections are crucial   in all the paper.

 We observe that the divisor cut on $\He$ by the curve defined by a polynomial $F$ coincides with that cut by every  curve  with equation  of the type $F+ GH$. Therefore, it is sufficient to consider those whose equation is $\prec$-reduced w.r.t. $H$, namely we may assume that $\partial_x(F)\leq q$. 
\ecr

\begin{proposition}\label{gradox} 
Let $F\in\FQ[x,y]$ be a polynomial such that $\partial_x (F)\leq q$ and let $\xx$ be the curve given by $F=0$. If  $\LM_\prec (F)=x^r y^s$, then 

\begin{equation} \label{gradox_i} \In_\prec(\langle H,F\rangle)=\langle x^{q+1}, x^ry^s, y^{s+q} \rangle
\end{equation}
Moreover,   the degree of the divisor $D$ cut on  $\He$ by  $\xx $ is $w(x^r y^s)=rq+s  (q+1)$. \\
More generally, if $D$ is a divisor over $\mathcal H$ and  $x^r y^s$ is any monomial in $ \In_\prec (I_D)$ with  $r\leq q$, then $\vert D\vert \leq  r q + s (q+1)$.
\end{proposition}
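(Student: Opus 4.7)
The plan is to exploit the interplay between the degrevlex order $\prec$ and the weighted order induced by $w=[q,q+1]$, together with pole orders at the point at infinity $P_\infty$ on $\overline{\He}$.

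First, I would check that on monomials $x^ay^b$ with $a\le q$ the order $\prec$ agrees with the order by $w$-weight. A short case analysis on total degrees and $x$-exponents, made possible by the bound $a\le q$, gives this; combined with the uniqueness of the writing $aq+b(q+1)$ for elements of $\Lambda$ with $a\le q$, it puts such monomials in bijection with $\Lambda$, so that the $\prec$-leading monomial of any polynomial with $\partial_x\le q$ is determined by its pole order at $P_\infty$. Next I would compute $|D|$ via the principal divisor of $F$ on $\overline{\He}$. Since $v_{P_\infty}(x)=-q$ and $v_{P_\infty}(y)=-(q+1)$, and since the $w$-weights of the monomials appearing in $F$ are all distinct, no cancellation at $P_\infty$ occurs and $v_{P_\infty}(F)=-w(\LM_\prec F)=-(rq+s(q+1))$. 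The principal divisor of $F$ on $\overline{\He}$ has zero degree and its zero part is exactly $D$, so $|D|=rq+s(q+1)$.

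The crucial step is to produce $y^{s+q}$ in $\In_\prec(\langle H,F\rangle)$. Consider $g:=y^sH-x^{q+1-r}F\in\langle H,F\rangle$; modulo $H$ it equals $-x^{q+1-r}F$, so $v_{P_\infty}(g)=-(q+1-r)q-(rq+s(q+1))=-(s+q)(q+1)=-w(y^{s+q})$. Reducing $g$ modulo $H$ in $\FQ[x,y]$, namely iteratively replacing $x^{q+1}$ with $y^q+y$ (a substitution that strictly lowers total degree, hence $\prec$), produces $\bar g\in\langle H,F\rangle$ with $\partial_x\bar g\le q$; by the compatibility of the first step, $\LM_\prec\bar g$ is the unique monomial of largest $w$-weight appearing in $\bar g$, which must be $y^{s+q}$. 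This gives $\langle x^{q+1},x^ry^s,y^{s+q}\rangle\subseteq\In_\prec(\langle H,F\rangle)$, and a direct enumeration shows that the sous-\'escalier of the left-hand monomial ideal has exactly $r(s+q)+(q+1-r)s=rq+s(q+1)$ elements, matching $\dim_{\FQ}\FQ[x,y]/\langle H,F\rangle=|D|$; equality of the two initial ideals follows.

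The general statement then comes out by the same reduction: given $x^ry^s\in\In_\prec(I_D)$ with $r\le q$, pick any $F\in I_D$ with that leading monomial and reduce modulo $H$ (which preserves the leading monomial since all new terms have strictly lower total degree) to obtain $\partial_x F\le q$; applying the first part to this $F$ yields an $\FQ$-divisor of degree $rq+s(q+1)$ on $\He$ containing $D$, whence the bound $|D|\le rq+s(q+1)$. The main obstacle is the third step: $g$ itself can have $\prec$-leading term of total degree $s+q+1$, coming from cross products of $x^{q+1-r}$ with subleading monomials of $F$, so the identity $\LM_\prec\bar g=y^{s+q}$ cannot be read off directly from $g$ but is established via the valuation at $P_\infty$ after an honest reduction modulo $H$.
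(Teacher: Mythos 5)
Your proof is correct. A direct comparison is only partly possible because the paper does not prove this proposition in situ: it cites Proposition 2.9 and Corollary 2.10 of the companion paper, and the style of that machinery (compare the proof of Lemma \ref{comodo}, where the combination $xF-y^{\kappa}H-xGH$ is formed and its leading monomial identified by direct expansion) points to a purely Gr\"obner-theoretic treatment, namely showing by explicit computation that $H$, $F$ and the reduction of the S-polynomial $y^{s}H-x^{q+1-r}F$ form a Gr\"obner basis and then counting the sous-\'escalier. You reach the same conclusions by a more geometric route: the compatibility of $\prec$ with the $w$-order on monomials with $x$-exponent at most $q$ (your step 1 is essentially Remark \ref{qualim}, and the estimate $(a-a')q+(b-b')(q+1)\geq (a'-a)+q+1>0$ when total degrees differ confirms it) lets you read $\LM_\prec$ of any polynomial with $\partial_x\leq q$ off its pole order at $P_\infty$; the degree of the intersection then follows from $\deg(\mathrm{div}(F))=0$ on $\overline{\He}$, and the leading monomial of the reduced S-polynomial is identified from its valuation rather than by expanding it. This cleanly handles the genuine difficulty you correctly isolate, that $\LM_\prec\bigl(y^{s}H-x^{q+1-r}F\bigr)$ need not equal $y^{s+q}$ before reduction modulo $H$, at the price of invoking $v_{P_\infty}(x)=-q$, $v_{P_\infty}(y)=-(q+1)$, smoothness of $\He$ (to equate $v_P(F)$ with the local intersection multiplicity), and irreducibility of $H$ (to ensure $H\nmid F$, which your hypothesis $\partial_x F\leq q$ guarantees) --- facts a combinatorial proof does not need. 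The closing steps (the sous-\'escalier of $\langle x^{q+1},x^ry^s,y^{s+q}\rangle$ has exactly $rq+s(q+1)$ elements, a monomial ideal containing another of the same finite colength equals it, and the final bound obtained by reducing a representative of $x^ry^s$ in $I_D$ modulo $H$, which preserves the leading monomial) are all sound.
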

\begin{proof}
See Proposition 3.4 and Corollary 3.5 of \cite{ChiaraMargherita-fase3e4-parmin}.
\end{proof}

\bcR
Note that any $\FQ$-divisor $D$
contains  the support of some codeword of $C_m$ if the image  of $V_m$ in $A_D$  has dimension less than $\vert  D\vert$. More precisely:
\ecr
\begin{proposition}\label{fondamentale} 
	Let $D$ be a $\FQ$-divisor on the Hermitian curve. Then the following are equivalent 
	\begin{enumerate}
		\item\label{fondamentale_i}  $\exists \ \mathbf c \in  C_m $ with $\mathbf c\ne 0$ such that $\supp(\mathbf c)\subseteq D$;
		\item\label{fondamentale_ii}   $\DIM(V_{m,D})\leq  \vert  D\vert$  as $\FQ$ vector space;
		\item\label{fondamentale_iii} $\exists \  x^uy^v\in\cN(\In_\prec(I_D))$  such that $m+1\leq w(x^uy^v) \leq m+q+1$; 
	\end{enumerate}  
\end{proposition}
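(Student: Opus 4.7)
My plan is to prove the chain $(i)\Leftrightarrow(ii)\Leftrightarrow(iii)$; the first equivalence is a duality unwinding, while $(ii)\Leftrightarrow(iii)$ encodes the whole interaction between the weight filter $V_m$ and the Gr\"obner geometry of $I_D$.

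For $(i)\Leftrightarrow(ii)$, I would identify $A_D$ with $\FQ^{|D|}$ via evaluation at the points of $D$. A codeword $\mathbf{c}$ supported on $D$ has vanishing coordinates outside $D$, so the orthogonality condition $\mathbf{c}\cdot\phi_E(f)=0$ for every $f\in V_m$ collapses to $\mathbf{c}_D\cdot\phi_D(f)=0$. Hence codewords supported on $D$ correspond bijectively to the orthogonal complement of $V_{m,D}$ in $A_D$, and a nonzero one exists exactly when $V_{m,D}$ is a proper subspace of $A_D$.

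For $(ii)\Leftrightarrow(iii)$, the structural claim at the heart of the argument is
\begin{equation}\label{ClaimV}
V_{m,D}\;=\;\mathrm{span}_{\FQ}\bigl\{\,x^{u}y^{v}\in\mathcal N(\In_\prec(I_D))\ :\ w(x^{u}y^{v})\leq m\,\bigr\}.
\end{equation}
The inclusion ``$\supseteq$'' is immediate since any such monomial already lies in $\B_m$ and is its own normal form modulo $I_D$. For ``$\subseteq$'' I would combine two inputs from the preliminaries: the inclusion $\mathcal N(\In_\prec(I_D))\subseteq\mathcal N(\In_\prec(I_E))=\B$ (coming from $I_E\subseteq I_D$), and the coincidence of $\prec$ with the $w$-degree on $\B$ from Remark~\ref{qualim}\eqref{qualim2}. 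Together these imply that reducing any $x^{r}y^{s}\in\B_m$ modulo $I_D$ produces only monomials in $\mathcal N(\In_\prec(I_D))\subseteq\B$ that are strictly $\prec$-smaller, hence of strictly smaller $w$-degree. Granting \eqref{ClaimV}, one has $\dim V_{m,D}=|\{x^{u}y^{v}\in\mathcal N(\In_\prec(I_D)):w(x^{u}y^{v})\leq m\}|$ and $|D|=|\mathcal N(\In_\prec(I_D))|$, so $(ii)$ is equivalent to the existence of some $x^{u}y^{v}\in\mathcal N(\In_\prec(I_D))$ with $w(x^{u}y^{v})>m$.

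The final step is to upgrade ``$w>m$'' to ``$w\in[m+1,m+q+1]$''. If $x^{u}y^{v}$ already lies in the tighter range we are done; otherwise I would pick, among all divisors of $x^{u}y^{v}$ (which all stay in $\mathcal N(\In_\prec(I_D))$ because that ideal is monomial), one $x^{u^{*}}y^{v^{*}}$ of maximal $w$-degree not exceeding $m+q+1$. The contradiction argument is short: if $w(x^{u^{*}}y^{v^{*}})\leq m$, then since $(u^{*},v^{*})\neq(u,v)$ one could adjoin one more factor of $x$ or of $y$, shifting the weight by $q$ or $q+1$ respectively and still staying $\leq m+q+1$, contradicting maximality of $x^{u^{*}}y^{v^{*}}$. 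Hence $w(x^{u^{*}}y^{v^{*}})\in[m+1,m+q+1]$.

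The step I expect to be most delicate is the containment ``$\subseteq$'' in \eqref{ClaimV}: a priori, reduction modulo the Gr\"obner basis of $I_D$ could introduce monomials of higher $w$-degree and destroy the clean weight-truncation of $V_{m,D}$. What rescues the argument is exactly that all reductions take place inside $\B$, where $\prec$-reduction is automatically $w$-decreasing; once this is secured, everything else is counting monomials in $\mathcal N(\In_\prec(I_D))$ by $w$-weight and a small combinatorial walk on divisors.
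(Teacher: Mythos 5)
Your proof is correct and follows essentially the route the paper relies on for this statement (the paper itself only cites Proposition 2.8 of \cite{ChiaraMargherita-fase3e4-parmin}): duality to get \eqref{fondamentale_i}$\Leftrightarrow$\eqref{fondamentale_ii}, the identification of $V_{m,D}$ with the span of the standard monomials of $w$-degree at most $m$ --- which is exactly what Remark \ref{qualim}\eqref{qualim2} together with $\cN(\In_\prec(I_D))\subseteq\B$ is set up to deliver --- and the divisor-walk inside the sous-\'escalier to land in the window $[m+1,m+q+1]$. One remark: as printed, condition \eqref{fondamentale_ii} reads $\DIM(V_{m,D})\leq\vert D\vert$, which is automatic because $V_{m,D}\subseteq A_D$ and $\DIM(A_D)=\vert D\vert$; your argument correctly establishes the intended strict inequality $\DIM(V_{m,D})<\vert D\vert$, which is what the equivalence with \eqref{fondamentale_i} and \eqref{fondamentale_iii} actually requires.
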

\begin{proof} 
	See Proposition 3.3 of \cite{ChiaraMargherita-fase3e4-parmin}.
\end{proof}
\ecr

\begin{corollary}\label{contiene} Let us consider two Hermitian code $C_m$ and $C_{m'}$ with $m<m'$. Then $$\DD_m \supseteq \DD_{m'} \quad  \hbox{ and  } \quad d(m)\leq d(m').$$
	If, in particular, $d(m)=d(m')$, then $\MM_m \supseteq \MM_{m'}$.
\end{corollary}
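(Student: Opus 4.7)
The plan is to derive everything from the containment of the codes themselves, which is essentially recorded in the paper already. Since $V_m$ is by definition the $\FQ$-span of the classes of monomials of $w$-degree less than $m+1$ and $V_{m'}$ is the span of those of $w$-degree less than $m'+1$, the hypothesis $m<m'$ gives $V_m\subseteq V_{m'}$. Evaluating at the points of $E$ respects this inclusion, so $C(V_m)\subseteq C(V_{m'})$, and taking orthogonal complements reverses the containment: $C_{m'}=C(V_{m'})^{\perp}\subseteq C(V_m)^{\perp}=C_m$, a fact the paper states explicitly right after Remark~2.7.

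From here, the first assertion $\DD_m\supseteq\DD_{m'}$ is immediate. If $D\in\DD_{m'}$, then $D=\supp(\mathbf c)$ for some nonzero $\mathbf c\in C_{m'}$; the inclusion $C_{m'}\subseteq C_m$ shows $\mathbf c\in C_m$, hence $D\in\DD_m$. For the distance inequality, I would observe that $d(m)$ is a minimum over the strictly larger set $C_m\setminus\{0\}\supseteq C_{m'}\setminus\{0\}$, so the minimum can only decrease or stay the same, giving $d(m)\le d(m')$.

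Finally, under the extra hypothesis $d(m)=d(m')$, I would combine the two facts above. Take $D\in\MM_{m'}$ and write $D=\supp(\mathbf c)$ with $\mathbf c\in C_{m'}$ of weight $d(m')$. Then $\mathbf c\in C_m$ and its weight equals $d(m')=d(m)$, which is the minimum weight in $C_m$, so $\mathbf c$ is a minimum-weight codeword of $C_m$ and $D=\supp(\mathbf c)\in\MM_m$. This yields $\MM_m\supseteq\MM_{m'}$.

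There is no substantial obstacle: the statement is a formal consequence of the chain $V_m\subseteq V_{m'}\Rightarrow C_{m'}\subseteq C_m$, and one only needs to keep straight that taking duals reverses inclusions while taking minima over larger sets gives smaller values. I would not even need to invoke Propositions~\ref{gradox} or \ref{fondamentale} for this corollary, though one could alternatively phrase the argument through condition \eqref{fondamentale_iii} of Proposition~\ref{fondamentale}: the set of monomials $x^uy^v$ with $m+1\le w(x^uy^v)\le m+q+1$ can only shrink as $m$ grows past certain thresholds, but the direct codeword-chasing argument above is cleaner.
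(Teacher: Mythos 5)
Your proof is correct and follows the same route the paper implicitly takes: the corollary is a formal consequence of the containment $C_m \supseteq C_{m'}$ for $m<m'$, which the paper records right after defining the codes, and the rest is the codeword-chasing you describe. Nothing further is needed.
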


\bcR

The following example points out in which way we can apply Proposition \ref{fondamentale} and Example \ref{solito3}, that is the continue of Example \ref{solito3pre}, shows how the definition of $\widetilde m$ and the previous results allow to compute the distance of the same codes.

\begin{example}\label{solito3pre}
	Consider again the case $q=4$ and the codes $C_m$ with     $m=11,12,13,14$.\\
	If $D$ is a $\FQ$-divisor which is the support of some codewords of $C_{11}$, we know  by Proposition \ref{fondamentale} \eqref{fondamentale_iii}   that $\cN(\In_\prec(I_D))$  contains at least one of the monomials  $x^3, x^2y, xy^2, y^3$, hence $|D|\geq 4$, since  there are  at least $4$ monomials that divide each of those monomials.  Then  $d(m)\geq 4$.  
\end{example}
\ecr

\begin{lemma}\label{comodo}
Let  $D$ be a divisor over  the  Hermitian curve and let $\kappa$ be any non-negative integer. Then
$$ y^{\kappa+q}\in \cN(\In_\prec(I_D)) \Longrightarrow x^q y^\kk \in \cN(\In_\prec(I_D)). $$
\end{lemma}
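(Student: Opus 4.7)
The plan is to establish the contrapositive: if $x^q y^\kk \in \In_\prec(I_D)$, then $y^{q+\kk} \in \In_\prec(I_D)$. The heart of the argument is Proposition \ref{gradox}, which, applied to any polynomial $F$ with $\partial_x(F)\le q$ and $\LM_\prec(F) = x^r y^s$, places $y^{s+q}$ in the initial ideal $\In_\prec(\langle H, F\rangle)$. So the strategy is to produce a suitable $\tilde f \in I_D$ that satisfies the hypotheses of Proposition \ref{gradox} with $(r,s)=(q,\kk)$, and then read off the desired conclusion from the inclusion $\langle H, \tilde f\rangle \subseteq I_D$.

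First I would pick any $f \in I_D$ with $\LM_\prec(f) = x^q y^\kk$ (which exists by hypothesis) and reduce $f$ modulo $H$ to enforce $\partial_x \le q$. This amounts to iteratively replacing every term $c x^a y^b$ of $f$ with $a \ge q+1$ by $c x^{a-q-1}(y^{b+q} + y^{b+1})$, which keeps us inside $I_D$ since we only add multiples of $H \in I_\He \subseteq I_D$. The crucial verification is that the leading monomial $x^q y^\kk$ survives all such reductions. For this, note that any such subleading term of $f$ satisfies $x^a y^b \prec x^q y^\kk$, and since the equality-of-total-degree case in $\prec$ requires $a < q$ while we have $a \ge q+1$, we must actually have $a+b < q+\kk$. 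Consequently both new terms introduced by the reduction have total degree strictly less than $q+\kk$, hence both are $\prec x^q y^\kk$. The procedure terminates (each step strictly decreases the $x$-exponent of the term being reduced) and yields $\tilde f \in I_D$ with $\partial_x(\tilde f) \le q$ and $\LM_\prec(\tilde f) = x^q y^\kk$.

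Next I would apply Proposition \ref{gradox} to $\tilde f$ with $r=q$, $s=\kk$, obtaining
$$\In_\prec(\langle H, \tilde f\rangle) = \langle x^{q+1},\, x^q y^\kk,\, y^{q+\kk}\rangle.$$
Since $H, \tilde f \in I_D$ give $\langle H, \tilde f\rangle \subseteq I_D$, it follows that $y^{q+\kk} \in \In_\prec(\langle H, \tilde f\rangle) \subseteq \In_\prec(I_D)$, as required.

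The main obstacle I anticipate is the intermediate reduction step: one has to be careful that cutting $f$ down to $\partial_x \le q$ does not disturb $\LM_\prec(f)$ or expel it from $I_D$. The degree count outlined above resolves this, and after that, Proposition \ref{gradox} supplies the conclusion essentially for free.
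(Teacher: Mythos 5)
Your overall strategy is sound and genuinely different from the paper's. The paper proves the same contrapositive but does so in one explicit step: it writes a monic $F\in I_D$ with leading monomial $x^qy^\kk$ as $F = x^qy^\kk + x^{q+1}G + M$ and exhibits the combination $S := xF - y^\kk H - xGH \in I_D$, whose leading monomial is computed directly to be $y^{\kk+q}$. You instead normalize $F$ by division modulo $H$ and then invoke Proposition \ref{gradox}, which already lists $y^{s+q}$ among the generators of $\In_\prec(\langle H, F\rangle)$. Your route reuses machinery already established and avoids the ad hoc algebraic manipulation, at the cost of having to verify that the reduction preserves the leading monomial.

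That verification, as written, contains an error: the degrevlex tie-break is inverted. With $\prec$ the degrevlex order and $y \succ x$, among monomials of the same total degree the \emph{larger} one is the one with the \emph{smaller} $x$-exponent (this is visible in the paper's own proof, where the degree-$(q+\kk)$ terms of $F$ lying below the leading term $x^qy^\kk$ are exactly those of the form $x^{q+i}y^{\kk-i}$ with $i\geq 1$, collected into $x^{q+1}G$). So a subleading term $x^ay^b$ with $a \ge q+1$ may perfectly well satisfy $a+b = q+\kk$; your claimed inequality $a+b < q+\kk$ fails, for instance, for $x^{q+1}y^{\kk-1}$. Fortunately the step survives for a simpler reason: $x^ay^b \preceq x^qy^\kk$ forces $a+b \le q+\kk$, and the two terms produced by one reduction have total degrees $a+b-1$ and $a+b-q$, both strictly less than $q+\kk$. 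Hence no term of degree $q+\kk$ is ever created, the monomial $x^qy^\kk$ is neither reduced (its $x$-exponent is only $q$) nor cancelled, and every new term is $\prec x^qy^\kk$ by the degree comparison alone. With that one correction your argument is complete.
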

\begin{proof}
We prove the equivalent statement   $x^q y^\kappa \in \In_\prec(I_D)  \Longrightarrow y^{\kappa+q}\in \In_\prec(I_D)$.\\
Let $F$ be a monic polynomial in $I_D$ such that $\In_\prec(F)=x^q y^\kk $; we can write it as $F=x^q y^\kappa+ x^{q+1}G + M$ with  $G= \sum_{i=1}^\kappa a_ix^{i-1}y^{\kappa-i} $ and $\partial M<q+\kappa$. Then  the polynomial  $S:=x F- y^\kappa H-xGH$ belongs to $ I_D$. We prove that   its leading monomial   is $y^{\kk+q}$. By  a straightforward computation we see that  
 $S=     y^{\kappa +q}+x( M +y^q G+yG) $    is a polynomial of degree $q+\kappa$, hence  $y^{\kappa +q}$ is its maximum degree with respect to  the term order $\prec$.
\end{proof}

\section{Distance and  minimum-weight codewords of    codes in the I phase}\label{sez.phaseI}

Now we focus on  the   codes $C_m$ with $m\leq  q^2-2$, such that    $m+1 $ belongs to a sequence
 $$ hq=w(x^h), \quad   \dots\quad  (h-i)q+i(q+1)=w(x^{h-i}y^i) \quad \dots \quad  h(q+1)=w(y^h)$$
 for some $1\leq h\leq q-1$.
The distance of these  codes is at least $h+1$, since for every divisor $D\in \DD_m$ the sous-\'escalier  $\cN(\In_\prec (I_D))$ contains at least one of the following monomials   $x^h$, $x^{h-1}y, \dots, y^h$, each having at least $h+1$ factors. 

\medskip

  In fact the distance of $C_m$ is equal to $ h+1$.  We obtain this equality considering the divisor $D$ complete intersection  of  a vertical line $\mathcal L$ of equation $x=u$, with $u\in \FQ$,  and the curve union of  $h+1$ horizontal lines passing through  $h+1$ $\FQ$-points of $\mathcal L \cap \He$.     

We can now describe all the divisors in $\MM_m$.  Since all the above monomials have more than $h+1$ factors, except   $x^h$ and $y^h$, for every $D\in \MM_m$ either  $y^h\in \cN(\In_\prec (I_D))$ or  $x^h\in \cN(\In_\prec (I_D))$:
\begin{itemize} 
\item[-] If $y^h\in \cN(\In_\prec (I_D))$, then $  \cN(\In_\prec (I_D))$  must be the set of divisors of $y^h$, namely $\In_\prec (I_D)=\langle x, y^{h+1}\rangle $. Thus,  we obtain precisely the divisors described above.
\item[-] If    $x^h\in \cN(\In_\prec (I_D))$, then $D\in \MM_m$   only if   $m+1\leq w(x^h)=hq$ and $  \cN(\In_\prec (I_D))$  is  the set of divisors of $x^h$, namely  $ \In_\prec (I_D)=\langle y, x^{h+1}\rangle $. Thus,   $\MM_{hq-1} $ also contains  the   divisors $D$ complete intersection of  a non-vertical line $\mathcal L'$ of equation $y=ux+v$, with $(u,v)\in \FQ^2\setminus \{ (0,0)\}$,  and the curve union of  $h+1$ vertical lines passing through  $h+1$ $\FQ$-points of $\mathcal L' \cap \He$.
\end{itemize} 

Note that this result is equivalent to that found in \cite{CGC-cd-art-marcolla2015small}. For more details see Section~\ref{sec.6}. 
\begin{example}\label{solito3} 
 \bcR Consider Example \ref{solito3pre}  where we proved that the distance of $C_m$ with     $m=11,12,13,14$ is $d(m)\geq 4$. \ecr 
On the other hand, we see that the divisor $D'$ cut on  $\mathcal H$ by the line  $x=0$  is    the support of a codeword of $C_{14}$  since  it is  $\FQ$-divisor  on $\He$ and  $\cN(\In_\prec(I_D))$ contains the monomial $y^3$ whose  $w$-weight is $15$, so, \bcR by Proposition~\ref{fondamentale} \eqref{fondamentale_ii},  $d(14)\leq 4$.
 Therefore, by Corollary \ref{contiene} we have \ecr  $4\leq d(11)\leq d(12) \leq d(13) \leq d(14) \leq 4$  and we obtain that  the distance of the codes $C_{11}$,  $C_{12}$,  $C_{13}$, and  $C_{14}$,  is  $4$.\\
 All the divisors $D\in \MM_{14}$ are such that $\In_\prec(I_D)=\langle x, y^4\rangle$. On the converse, every  divisor on $\He$ of degree $4$ such that   $\In_\prec(I_D)=\langle x, y^4\rangle$ is in $\MM_4$ if it is made of $4$ different $\FQ$-points. Therefore  the divisors $D\in \MM_{14}$ are precisely the   sets of $4$ $\FQ$-points of $\He$ with the same $x$-coordinate. They are also the set of divisors in $\MM_{12}$, $\MM_{13}$,  while in $\MM_{12}$ there are also those with $\In_\prec(I_D)=\langle y, x^4\rangle$.\\
 \bcR
 Note that  in the above cases   $m=11,12,13,14$, the distance of the code is equal both to $h+1=4$ and to $\widetilde{\delta}_m=\delta(\widetilde m)=\delta(14)=4$ (see Remark~\ref{rem.dist.I}). However, for $m=9$ we have $h=2$ so that $d(m)=3$, while $\widetilde m=14$ and $\widetilde{\delta}_m=4$.
\end{example}
\bcR
\begin{remark}\label{rem.dist.I}
	Note that for any $q^2-q-1\leq m\leq q^2-2$ we have that
	$$d(m)=\widetilde{\delta}_m=h+1=q.$$
	In fact, $m+1$ belongs to the sequence $(q-1-i)q+i(q+1)=w(x^{q-1-i}y^i) $ where $i=0,\ldots,q-1$. So the distance of $C_m$ is $h+1=q$.  \\
	Moreover, each $m$   share  the same value $\widetilde m$.  More precisely,  for any $q^2-q-1\leq m\leq q^2-3$, by \eqref{listagap3}, we find $\widetilde m=q-(q-2)(q+1)=q^2-2$ and $\widetilde{\delta}_m=w(x)=q$. Instead, for $m=q^2-2$, $\widetilde m=m$ and  $\widetilde{\delta}_m=q$.	
\end{remark}
\ecr
\section{The distance of the Hermitian codes $C_m$ in the II phase}\label{sez.phaseII}

In this section we  prove  that  the distance of  the  codes in the II phase is given by the formula $d(m)=\widetilde \delta_m$. This result  holds true also for the codes in  the final  part  of the I phase, that   we include  in our statements, and  for the codes of the III and IV phase already considered in \cite{ChiaraMargherita-fase3e4-parmin}.   Furthermore, Theorem \ref{basePERparole}  gives informations  also about  the small-weight codewords of $C_m$, more precisely for those of weight $d(m)+\kk$ for some $\kk\leq q$. Therefore we state and prove it in the widest generality, namely also including III and IV phases.

Of course, for every $m$, the value of $ \widetilde \delta_m$ agrees with that given by the well known formulas of  the distance  proved by \cite{CGC-cd-book-AG_HB,kirfel1995minimum,stichtenoth1988note,yang1992true},  provided one takes  in account the different  notation.\\

We start considering the codes with $m=\widetilde m$.

\begin{theorem} \label{basePERparole} 
 Let us consider an Hermitian  code $C_m$ with  $ m\geq  q^2-q-1$   and  $m=\widetilde m$. Let $x^u y^v\in \B$ be  a monomial with  $w(x^u y^v)=m +1+\kk$ for some $0\leq \kk \leq q$.\\
 If  $D$ is a divisor over $\mathcal H$  such that $x^u y^v\in \cN(\In_\prec(I_D))$, then 
$|D|\geq \widetilde \delta_m+\kk.$\\
More precisely, if $\delta(m)=\mu q + \lambda (q+1)$, then 
 \begin{enumerate}
	\item[\normalfont(a)] if $ \mu=0$ and  $0\leq \kk\leq q-\lambda$ then  $|D| \geq \widetilde \delta_m+ \kk(q+1-\lambda-\kk)$.
	\item[\normalfont(b)] if  $\mu=0$ and  $\kk\geq q-\lambda+1$ then $|D| \geq \widetilde \delta_m+ \kk$.
   \item[\normalfont(c)] if $ \mu\neq 0$  and    $0\leq \kk \leq \mu-1$,  then  $|D| \geq \widetilde \delta_m+\kk.$
   \item[\normalfont(d)]  if  $ \mu\neq 0$ and   $\mu\leq \kk \leq q-\lambda$, then $|D| \geq \widetilde \delta_m+(\kk-\mu)(q-\lambda-\kk)+\kk$.
   \item[\normalfont(e)] if   $ \mu\neq 0$  and  $q-\lambda+1\leq \kk \leq q$, then $|D| \geq \widetilde \delta_m+\kk.$
 \end{enumerate}
\end{theorem}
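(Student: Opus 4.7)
Because $|D|=\#\cN(\In_\prec(I_D))$, it suffices to exhibit in $\cN(\In_\prec(I_D))$ enough distinct monomials to meet the claimed bounds. My plan is to assemble two rectangles of monomials: the divisors of $x^uy^v$ (forced in by the order-ideal property) and a second rectangle extracted via Lemma~\ref{comodo}.

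\textbf{Step 1 -- locating $(u,v)$.} Using the bijection $\B\leftrightarrow\Lambda_{\B}$ of Remark~\ref{qualim}, consecutive weights of monomials in $\B$ correspond to $(u,v)\mapsto(u-1,v+1)$ while $u\geq 1$, and to the single ``jump'' $(0,v)\mapsto(q,v-q+1)$ across a gap segment of $\Lambda$, see \eqref{listagaps}--\eqref{listanogaps}. Starting from the monomial of weight $m+1=\mu q+\lambda(q+1)+(q^2-q-1)$ and iterating $\kk$ times yields the explicit formula
\[
(u,v)=\begin{cases}
(q-\kk,\ \lambda-1+\kk) & \text{if }\mu=0,\\
(\mu-1-\kk,\ \lambda+q-1+\kk) & \text{if }\mu\neq 0,\ 0\leq\kk\leq\mu-1,\\
(q+\mu-\kk,\ \lambda+\kk-1) & \text{if }\mu\neq 0,\ \mu\leq\kk\leq q,
\end{cases}
\]
the transition between the second and third line being precisely the one jump.

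\textbf{Step 2 -- counting in $\cN(\In_\prec(I_D))$.} Order-idealness gives
\[
R_1:=\{x^iy^j : 0\leq i\leq u,\ 0\leq j\leq v\}\subseteq\cN(\In_\prec(I_D)),\qquad |R_1|=(u+1)(v+1).
\]
When $v\geq q$, each $y^{v'}$ with $q\leq v'\leq v$ lies in $R_1$, so Lemma~\ref{comodo} produces $x^qy^{v'-q}\in\cN(\In_\prec(I_D))$; order-idealness then forces the second rectangle $R_2:=\{x^iy^j : 0\leq i\leq q,\ 0\leq j\leq v-q\}$ into $\cN(\In_\prec(I_D))$. A direct computation of the overlap $|R_1\cap R_2|=(u+1)(v-q+1)$ gives
\[
|R_1\cup R_2|=(u+1)q+(q+1)(v-q+1).
\]

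\textbf{Step 3 -- matching the five sub-cases.} Cases (a) and (d) (and the degenerate corner $\mu\neq 0,\ \lambda=\kk=0$ inside (c)) are exactly those with $v\leq q-1$: $R_2$ is not needed, and substituting Step~1 in $(u+1)(v+1)$ gives, after elementary expansion, $\widetilde{\delta}_m+\kk(q+1-\lambda-\kk)$ in (a), $\widetilde{\delta}_m+(\kk-\mu)(q-\lambda-\kk)+\kk$ in (d), and $\widetilde{\delta}_m$ in the corner. In cases (b), (e), and the remainder of (c), $v\geq q$ and the count $|R_1\cup R_2|$ simplifies via $\widetilde{\delta}_m=\mu q+\lambda(q+1)$ to $\widetilde{\delta}_m+\kk$, which in each instance matches the claim.

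\textbf{Main obstacle.} The delicate point is Step~1: one must check that, as $\kk$ sweeps $0,1,\dots,q$, the sequence of monomials of consecutive weights in $\B$ undergoes \emph{exactly} the one jump displayed above, and no further one. This follows from \eqref{listagaps}--\eqref{listanogaps} together with the hypothesis $m=\widetilde m$ and the bound on $\delta(m)$ in the relevant phases: no other gap segment of $\Lambda$ is crossed in this range, so the only non-trivial transition to track is the one between cases (c) and (d). Once Step~1 is secured, the remainder of the argument is polynomial bookkeeping.
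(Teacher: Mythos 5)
Your proposal is correct and follows essentially the same route as the paper's own proof: identify the monomial of $w$-weight $m+1+\kk$ case by case, count its divisors, and when the $y$-exponent is at least $q$ invoke Lemma~\ref{comodo} to adjoin the second block of monomials; your inclusion--exclusion count $|R_1\cup R_2|=(u+1)q+(q+1)(v-q+1)$ agrees term by term with the paper's sums. The only (minor) added value is that you explicitly flag the degenerate corner $\mu\neq 0$, $\lambda=\kk=0$ of case (c), where Lemma~\ref{comodo} is vacuous but the single rectangle already gives $\widetilde\delta_m$.
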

\begin{proof}  First of all we observe that the minimum $m$ that satisfies the hypotheses  is   $ q^2-2$.  Therefore, we may assume without lost in generality $m\geq q^2-2$.\\
We first assume  that $\mu=0$, so that  $\widetilde \delta_m=\lambda(q+1)$. The only  monomial in $\mathcal B$ with  $w$-weight   $m+1+\kk$ is  $x^u y^v=x^{q-\kk}y^{\lambda+\kk-1}$.

\begin{enumerate}
\item[(a)]
If  $ q-\lambda-\kk\geq  0 $, counting the monomials that divides   $x^{q-\kk}y^{\lambda+\kk-1}$ we get the bound
 $|D| \geq (q-\kk+1)(\lambda+\kk)=\widetilde \delta_m+ \kk(q+1-\lambda-\kk)\geq \widetilde \delta_m+\kk.$
\item[(b)]  If  $\lambda+\kk-q> 0$,  we observe that $\lambda+\kk-1\geq q$ and  $y^{\lambda +\kk-1}\in  \cN(\In_\prec(I_D))$; then  we apply Lemma \ref{comodo} and see that also the monomial    $x^q y^{\lambda+\kk-1-q}$ is in $ \cN(In_\prec(I_D))$.  Counting the monomials that divide either   $x^{q-\kk}y^{\lambda+\kk-1}$ or  $x^q y^{\lambda+\kk-1-q}$ we get the bound
$|D| \geq (q-\kk+1)(\lambda+\kk)+ \kk(\lambda+\kk-q)=\widetilde \delta_m+\kk.$
\end{enumerate}
 Now suppose $\delta(m)=w(x^\mu y^\lambda)$ with $\mu >0$. The monomial $x^uy^v$ having  $w$-weight $m+1+\kk$  is $x^{\mu -\kk-1}y^{\lambda+q+\kk-1}$ for $\kk=0, \dots, \mu -1$ and $x^{\mu +q-\kk}y^{\lambda+\kk-1}$ for $\kk=\mu, \dots, q$.

\begin{enumerate}
\item[(c)]

 If  $0\leq \kk \leq \mu-1$, by    Lemma \ref{comodo} we also see  that $x^q y^{\lambda+\kk-1}\in \cN(\In_\prec (I_D))$. Counting the monomials that divide either $x^{\mu-\kk-1}y^{\lambda+q+\kk-1}$ or  $x^qy^{\lambda+\kk-1}$ we get 
$|D| \geq (\mu-\kk)(\lambda+q+\kk)+(q+1-\mu+\kk)(\lambda+\kk)=\widetilde \delta_m +\kk.$

\item[(d)]  If $\mu\leq \kk \leq q-\lambda$, counting the monomials that divide $x^{\mu+q-\kk}y^{\lambda+\kk-1}$ we get
$|D| \geq (\mu+q-\kk+1)(\lambda+\kk) = \widetilde \delta_m+(\kk-\mu)(q-\lambda-\kk)+\kk  \geq\widetilde \delta_m+\kk.$ 

\item[(e)]  If $q-\lambda+1\leq \kk \leq q$, we see in the same way that  $x^q y^{\lambda+\kk-1-q}\in \cN(\In_\prec (I_D))$.  Counting the monomials that divide either $x^{\mu+q-\kk}y^{\lambda+\kk-1}$ or  $x^qy^{\lambda+\kk-1-q} $  we get
$|D| \geq (\mu+q-\kk+1)(\lambda+\kk)  + (q-\mu-q+\kk)(\lambda+\kk-q)= \widetilde \delta_m+\kk.$
\end{enumerate}
 \end{proof}

\begin{theorem} \label{basePERparole2} Let $C_m$ be any code  with \bcR$q^2-q-1\ecr\leq m \leq 2q^2-2q-3$\ecr.
Then its   distance    is 
$$
\label{formulad}d(m)= \widetilde{\delta}_m.
$$
Moreover, there exist  divisors in $\MM_m$ that are cut on $\He$  by a suitable union of lines.
\end{theorem}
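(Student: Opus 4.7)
The plan is to establish the lower bound $d(m) \geq \widetilde\delta_m$ by case analysis on whether $m = \widetilde m$, and to match it with an explicit divisor cut on $\He$ by a union of lines, which will prove both assertions of the theorem.

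Consider $D \in \DD_m$; Proposition~\ref{fondamentale} produces $x^u y^v \in \cN(\In_\prec I_D)$ of weight $w \in [m+1, m+q+1]$. When $m = \widetilde m$, Theorem~\ref{basePERparole} with $\kk := w - m - 1 \in [0,q]$ yields $|D| \geq \widetilde\delta_m + \kk \geq \widetilde\delta_m$ at once. When $m < \widetilde m$, the integer $\widetilde m$ still satisfies the hypotheses of Theorem~\ref{basePERparole} and $\widetilde m - m \leq q-1$; so whenever $w \geq \widetilde m + 1$, the same theorem applied at $\widetilde m$ with $\kk := w - \widetilde m - 1 \in [0,q]$ gives the same bound.

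The remaining subcase $w \in [m+1, \widetilde m]$, which falls outside the scope of Theorem~\ref{basePERparole}, is the step I expect to require the most care. I would dispatch it by a direct count: writing $\delta(m) = (h-1)(q+1) + t$ as in \eqref{listagap2}--\eqref{listagap3}, so that $\widetilde\delta_m = hq$ and $\widetilde m - m = q+1-h-t$, the bijection of Remark~\ref{qualim}\eqref{qualim2} identifies the unique $\B$-monomial of weight $\widetilde m - k$ as $x^{h+k} y^{q-2-k}$ for $k \in [0, q-h-t]$. A direct expansion shows its number of divisors equals $(h+k+1)(q-1-k) = hq + (k+1)(q-h-1-k)$, which is $\geq hq = \widetilde\delta_m$ because $t \geq 1$ forces $k \leq q-h-1$.

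For the upper bound, I take the $\B$-representation $\widetilde\delta_m = aq + b(q+1)$ and pick distinct $x_1,\ldots,x_a, y_1,\ldots,y_b \in \FQ$ so that no $(x_i, y_j)$ lies on $\He$, which is possible since $a+b \leq q-1$ leaves more than enough room in $\FQ$. Setting $F := \prod_{i=1}^{a}(x-x_i)\prod_{j=1}^{b}(y-y_j)$, the divisor $D$ cut on $\He$ by $F=0$ satisfies $|D| = aq+b(q+1) = \widetilde\delta_m$ and $\In_\prec I_D = \langle x^{q+1}, x^a y^b, y^{b+q}\rangle$ by Proposition~\ref{gradox} (applied with $\LM_\prec F = x^a y^b$). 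The monomial $x^{a-1} y^{b+q-1}$ (or $x^q y^{b-1}$ when $a=0$) lies in $\cN(\In_\prec I_D)$ and has weight $\widetilde m + 1 \in [m+1, m+q+1]$, so Proposition~\ref{fondamentale} gives $D \in \DD_m$; combined with the lower bound, $D \in \MM_m$, and $D$ is cut on $\He$ by the union of the $a+b$ lines $x = x_i$ and $y = y_j$.
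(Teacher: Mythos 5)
Your lower bound is correct but follows a genuinely different route from the paper's in the case $m<\widetilde m$. The paper reduces (via Corollary \ref{contiene}) to the first integer $m$ of each gap segment, observes that $m-1$ then satisfies $\delta(m-1)=(h-1)(q+1)\in\Lambda$, and applies Theorem \ref{basePERparole} (a)--(b) \emph{at $m-1$} with $\kk=t+1$, which covers all weights $w\in[m+1,m+q+1]$ in a single stroke. You instead apply Theorem \ref{basePERparole} forward at $\widetilde m$ for the weights $w\geq\widetilde m+1$ (the bound $\kk=w-\widetilde m-1\leq q-1$ is right) and handle the leftover window $w\in[m+1,\widetilde m]$ by counting the divisors of $x^{h+k}y^{q-2-k}$ directly. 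Your identity $(h+k+1)(q-1-k)=hq+(k+1)(q-h-1-k)$ checks out, $t\geq1$ does force $k\leq q-h-1$, and these monomials do exhaust $\Lambda_\B\cap[m+1,\widetilde m]$, so the case analysis is complete. Your version is slightly longer but more self-contained: it avoids the reduction to the first element of the segment and does not rely on $m\in\Lambda_\B$ (equivalently $h\geq2$), which the paper's shift to $m-1$ needs.

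There is one genuine, though easily repaired, gap in your construction of the explicit divisor. The condition you impose --- no $(x_i,y_j)$ on $\He$ --- guarantees that the point sets coming from the vertical and the horizontal lines are disjoint, but it does not guarantee that each horizontal line $y=y_j$ meets $\He$ in $q+1$ distinct $\FQ$-points. If $\Tra(y_j)=y_j^q+y_j=0$, the line $y=y_j$ meets $\He$ only at $(0,y_j)$, with intersection multiplicity $q+1$: the scheme $\mathcal V(\langle H,F\rangle)$ still has degree $aq+b(q+1)$, which is all Proposition \ref{gradox} gives you, but the \emph{reduced} divisor has strictly fewer points and is not in $\MM_m$. (No such condition is needed on the $x_i$, since $\beta^q+\beta=\No(x_i)$ always has $q$ distinct roots in $\FQ$.) You must therefore also require $\Tra(y_j)\neq0$ for every $j$; your counting argument shows such a choice exists, since $b\leq q-2$ while there are $q^2-q$ admissible values of $y_j$. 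With that amendment the construction coincides with the one in the paper.
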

\begin{proof}  
  If $m=\widetilde m$, we have already proved the result in Theorem \ref{basePERparole}. 
Then assume  $\widetilde m>m$. By Corollary \ref{contiene} it  is sufficient to prove that $d(m)\geq d(\widetilde m)=\widetilde{\delta}_m$.\\
By this same result it is also sufficient to prove  the result for the  integers $m$ such that $\delta (m)$ is the first integer in a segment of gaps. By \eqref{listagap2}  and \eqref{listagap3} there is an  integer $h $ such that  $1\leq h  \leq  q-1$ and    $m=(q+h-3)(q+1)+1 $,  $\widetilde \delta_m=h q=w(x^h)$. 

\medskip
\bcR \noindent If $h=1$, we proved in Remark \ref{rem.dist.I} that $d(m)= \widetilde{\delta}_m$.\\ \ecr
\noindent Now we assume $h\geq 2$. In this case  $m=w(x^{q}y^{h-2})\in \Lambda_\B$ and $\delta(m-1)=(h -1)(q+1)=w(y^{h-1})\in \Lambda$, so that the integer $m-1$ is one of those considered in  Theorem \ref{basePERparole} $(a)$ and $(b)$. Therefore,  we know that    $ d(m-1)=\delta(m-1)= (h -1)(q+1)$.
 Let $D$ be any divisor in $\DD_m$. By hypothesis,   $\cN(\In_\prec(I_D))$  contains a  monomial $x^uy^v$  whose  $w$-weight is  $m+1+t $ for some $t\geq 0$.  By Corollary \ref{contiene}  $D$ also belongs to $ \DD_{m-1}$    and  we can apply to it 
Theorem \ref{basePERparole} $(a)$ with $\kk:=t+1\geq 1$ and $\lambda=h-1$.  \\
If $\kk\leq q+1-h$,  we get by $(a)$ of Theorem~\ref{basePERparole}, 
$$|D|\geq {\delta}(m-1)+ \kk(q+1-\lambda -\kk)=hq+ (k-1)(q+1-h-k )\geq hq=\widetilde{\delta}_m .$$
On the other hand, if $\kk> q+1-h$ then by $(b)$ of Theorem~\ref{basePERparole}, 
$$|D| \geq \delta(m-1)+\kk\geq (h-1)(q+1)  +(q+1-h)=hq =\widetilde{\delta}_m.$$

\noindent We obtain the equality  $d(m)=\widetilde{\delta}_m$ proving the second assertion.\\
We may assume that $m=\widetilde m$ as   $\MM_{\widetilde m}\subseteq \MM_m$ (Corollary \ref{contiene}).
With this assumption, we can apply Theorem \ref{basePERparole2} and see that  the equality $|D|=\widetilde \delta_m$ can hold  only when $\kk=0$, namely when $w(x^uy^v)=m+1$.
 In particular  if  $\mu=0$, the equality can hold  only when 
 $u=q$ and $v=\lambda-1$, while  if $\mu\geq 1$ the equality can hold only  when  $u=\mu-1$ and $v=\lambda+q-1$.\\
In both cases we  obtain an $\FQ$-divisor $D'$ of degree $\widetilde \delta_m$ as the divisor cut on  $\He$ by  a  curve $\y$ union of lines, as we proved in Theorem 4.2 of \cite{ChiaraMargherita-fase3e4-parmin}.
Specifically,
\begin{itemize}
\item[-]  if  $\mu=0$, it is sufficient to chose  $\y$ as the union of $\lambda$ horizontal lines that intersect $\He$ in  $q$ $\FQ$-points;   the monomial $x^q y^{\lambda-1}$ with $w$-weight $m+1$ belongs to $ \In_\prec(I_{D'}) =\langle x^{q+1},y^{\lambda}\rangle$, hence $D'$ is the support of codewords of $C_m$ by Proposition \ref{fondamentale} .
\item[-] if $\mu\geq 1$,  it is sufficient to chose  $\y$ as the union of $\mu$ vertical and $\lambda$ horizontal lines, each intersecting $\He$   in  $q$ and,   respectively, $q+1$ $\FQ$-points; the monomial $x^{\mu-1} y^{\lambda+q-1}$ with $w$-weight $m+1$ belongs to $\In_\prec(I_{D'}) = \langle x^{q+1},x^{\mu}y^{\lambda},y^{\lambda+q}\rangle$, hence $D'$ is the support of codewords of $C_m$.
\end{itemize}
\end{proof}


\section{Geometric description of minimum weight codewords in the II phase}\label{Sec.MinWord}

In this section we focus on the  $\FQ$-divisor that  are the support of minimum weight codewords of codes $C_m$  in the II phase and  prove that they are all complete intersection of two curves and describe equations for each of them.  In fact we prove this result assuming   $q^2-q-1\leq m \leq 2q^2-2q-3$, a range that contains the II phase and also part of the I phase.   

We first consider the case of a code $C_m$ with $m=\widetilde m$.

\begin{example}\label{esempio3}
Let us consider again the same case $q=4$  of the previous examples. For  $m=18$  we have $m+1=19\in \Lambda_\B$ and $ \delta(m)=8 \in \Lambda$, so that $m=\widetilde m=18$. By Theorem \ref{basePERparole2} the distance of the  $C_{18}$ is $d(m)=\widetilde \delta_m=\delta(m)=8$.  Let $D$ be a divisor in $\MM_{18}$.  By Theorem \ref{basePERparole},  the only monomial in $\B$ with  $w$-degree $m+1=19$, namely $xy^3$, belongs to  $\cN(\In_\prec(I_D))$. Then $\cN(\In_\prec(I_D))\supseteq \{1,y,y^2,y^3, x,xy,xy^2,xy^3\}$ and the inclusion is in fact an equality, as $|\cN(\In_\prec(I_D))|=|D|=8$.  The monomial $x^2$   is not one of them, hence it belongs to $\In_\prec(I_D)$, namely there is a curve $\xx$,  defined by a polynomial $F$ with leading monomial $x^2$,  that contains $D$.\\
Again the inclusion $D\subseteq \He \cap C$ is an equality as the degree of this  zero-dimensional scheme  is $8$ (Proposition \ref{gradox}). Therefore,  $D$ is the  complete intersection of the Hermitian curve  $\He$ and the curve defined by a polynomial $F$ whose  leading monomial is the unique monomial in $\B$ with $w$-weight $8= d(m)$.\\
On the converse, let $F$ be a  polynomial with leading monomial $x^2$, namely $F=x^2-(sy+s'x+s'')$   that     cuts on $\He$ a set $D$ of $8$ points with coordinates in $\FQ$. By Proposition \ref{fondamentale}, $D\in \MM_{18}$ since the the initial ideal of $I_D=\langle x^5-y^4-y,F\rangle =\langle y^4+y-x(sy+s'x+s'')^2 ,F \rangle$ does not contain the monomial $xy^3$ with $w$-weight  $19=m+1$.
\end{example}

\begin{theorem}\label{curvagenericamm} 
Let $C_m$ be an  Hermitian with $q^2-q-1\leq m \leq 2q^2-2q-3$ and $m=\widetilde m$. \\
If    $D$  is  a $\FQ$-divisor of degree $\delta(m)=\widetilde \delta_m=\mu q+\lambda (q+1)$, then 
$D\in \MM_m$  if and only if   $D=\He\cap \xx$ with $\xx$ a  curve defined by a polynomial $F$  such that $\LM_\prec (F)=x^\mu y^{\lambda}$. 
\end{theorem}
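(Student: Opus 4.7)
The plan is to prove both implications by pinning down the sous-\'escalier $\cN(\In_\prec(I_D))$ as tightly as possible from the respective hypotheses, and to transport information back and forth between the initial ideal and the geometry of the divisor.

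For the \emph{sufficiency}, I would start from $D = \He \cap \xx$, where $\xx$ is cut out by $F$ with $\LM_\prec F = x^\mu y^\lambda$. Reducing $F$ modulo $H$ does not alter $\LM_\prec F$ (since $\mu \leq q$), so I may assume $\partial_x F \leq q$. Proposition~\ref{gradox} then gives
\[
\In_\prec(I_D) = \langle x^{q+1},\, x^\mu y^\lambda,\, y^{\lambda + q}\rangle \qquad \text{and} \qquad |D| = w(x^\mu y^\lambda) = \widetilde\delta_m = d(m).
\]
To invoke Proposition~\ref{fondamentale}\eqref{fondamentale_iii} I just need to display a monomial in $\cN(\In_\prec(I_D))$ of $w$-weight $m+1$. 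Using the identity $m = \widetilde\delta_m + (q-2)(q+1)$, a quick calculation shows this role is played by $x^q y^{\lambda - 1}$ when $\mu = 0$ and by $x^{\mu - 1} y^{\lambda + q - 1}$ when $\mu \geq 1$; both lie strictly below the generators of $\In_\prec(I_D)$, hence belong to the sous-\'escalier.

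For the \emph{necessity}, I would start from $D \in \MM_m$, so that $|D| = \widetilde\delta_m$. Proposition~\ref{fondamentale}\eqref{fondamentale_iii} supplies some $x^u y^v \in \cN(\In_\prec(I_D))$ of $w$-weight $m + 1 + \kappa$ with $0 \leq \kappa \leq q$. Inspecting the bounds in Theorem~\ref{basePERparole}, the equality $|D| = \widetilde\delta_m$ is compatible only with case (a) at $\kappa = 0$ (when $\mu = 0$) or case (c) at $\kappa = 0$ (when $\mu \geq 1$). This forces $x^u y^v$ to be $x^q y^{\lambda - 1}$ or $x^{\mu - 1} y^{\lambda + q - 1}$, respectively. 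The divisor-counts that established those bounds are now \emph{saturated}: together with Lemma~\ref{comodo} (which, when $\mu \geq 1$, additionally places $x^q y^{\lambda - 1}$ in the sous-\'escalier), the enumerated monomials fill up all $\widetilde\delta_m = |\cN(\In_\prec(I_D))|$ slots. Hence
\[
\In_\prec(I_D) = \langle x^{q+1},\, x^\mu y^\lambda,\, y^{\lambda + q}\rangle.
\]
In particular there exists $F \in I_D$ with $\LM_\prec F = x^\mu y^\lambda$; letting $\xx$ be the curve $F = 0$, I have $D \subseteq \He \cap \xx$, and by Proposition~\ref{gradox} (after again reducing $F$ modulo $H$) the two sides share the same degree $\widetilde\delta_m$, giving $D = \He \cap \xx$.

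The \emph{main obstacle} is the step in the necessity direction that leaps from the existence of a single monomial of $w$-weight $m+1$ in $\cN(\In_\prec(I_D))$ to the complete determination of $\In_\prec(I_D)$. What makes this leap possible is the tightness of Theorem~\ref{basePERparole} at $\kappa = 0$: the divisor-counting estimates used there become equalities exactly when $|D| = \widetilde\delta_m$, so no room is left for additional monomials in the sous-\'escalier, and the structure of the initial ideal is rigidly forced.
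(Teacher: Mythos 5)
Your proposal is correct and follows essentially the same route as the paper's proof: sufficiency via Proposition \ref{gradox} plus exhibiting the monomial of $w$-weight $m+1$ in the sous-\'escalier, and necessity by observing that the bounds of Theorem \ref{basePERparole} force $\kappa=0$ and are saturated, so that $\cN(\In_\prec(I_D))$ is exactly the set of divisors of $x^q y^{\lambda-1}$ (resp.\ of $x^{\mu-1}y^{\lambda+q-1}$ or $x^q y^{\lambda-1}$), yielding $F$ with $\LM_\prec(F)=x^\mu y^\lambda$ and then equality of degrees. No gaps; the "saturation" framing is just a slightly more explicit rendering of the paper's appeal to cases (a) and (c).
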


\begin{proof}   
 Let $D$ be a $\FQ$-divisor cut on $\He$ by a curve $\xx$  defined by a polynomial $F$ such that $\LM_\prec (F)=x^\mu y^{\lambda}$. We may assume without lost of generality that  $F$ also satisfies the condition $\partial_x(F)\leq q$; in fact, if necessary,  we can substitute $F$  by its reduction modulo $H$, that  satisfies all the required conditions   $\LM_\prec (F')=x^\mu y^{\lambda}$, $\partial_x(F')\leq q$ and  $I_D=\langle H,F'\rangle $.
 Therefore we can apply Proposition \ref{gradox}  and see that  $|D|=\mu+\lambda (q+1)$ and $\cN(\In_\prec(I_D))=\cN( \langle x^{q+1}, x^\mu y^\lambda, y^{\lambda+q}\rangle)$  contains a monomial with $w$-weight  larger than $m$; more precisely it contains $x^{\mu-1}y^{\lambda+q-1}$ if $\mu>0$ and $x^{q}y^{\lambda-1}$ if $\mu=0$.\\
By Proposition \ref{fondamentale}  and Theorem \ref{basePERparole2}, $D$ belongs to  $\MM_m$,   since $\mu q + \lambda (q+1)=\widetilde \delta_m=d(m)$.

\medskip
\noindent On the other hand, let us assume that $D$ is a divisor in  $\MM_m$.
By  Theorem \ref{basePERparole2}    its degree is $\mu q+\lambda (q+1)=\widetilde \delta_m$; moreover,  by Proposition  \ref{fondamentale}  and Theorem \ref{basePERparole},  its support  contains the monomial $x^a y^b\in \B$ such that $w(x^ay^b)=m+1$. \\
We now consider the  two cases related to the integer $\mu$.
\begin{enumerate}
\item[]  If $\mu=0$, then $x^a y^b=x^q y^{\lambda -1}$.  By Theorem \ref{basePERparole} {\rm{(a)}}, the sous-\'escalier $\cN(\In_\prec(I_D))$is the set  of monomials that divides $x^q y^{\lambda -1}$. Hence  $y^\lambda$  belongs to  $\In_\prec(I_D)$, namely   there is   a polynomial $F$ in $I_D$ with leading monomial  $  y^\lambda$.

\item[] If $\mu\neq 0$, then $x^a y^b=x^{\mu-1} y^{\lambda+q -1}$. By Theorem \ref{basePERparole} {\rm{(c)}},  $\cN(\In_\prec(I_D))$  is the set of monomials that divides either $x^{\mu-1}y^{\lambda+q-1}$ or  $x^qy^{\lambda-1}$, hence there is a polynomial $F$ in $I_D$ with leading monomial  $x^\mu y^\lambda$.
\end{enumerate}
In both cases,   $I_D$ is contained in $ \langle H,F\rangle$ and, by Proposition \ref{gradox}, they are in fact  equal since   they define zero-dimensional schemes  of the same degree $d(m)=\widetilde \delta_m$.  
\end{proof}

We observe that the proof of the above result is very similar to that of the analogous results for Hermitian codes of the III and IV phases in \cite{ChiaraMargherita-fase3e4-parmin}. Before state and prove the results for the other codes  $C_m$ of the II phase,  we show by an example the different behavior that appears when   $m<\widetilde m$.
 
\begin{example} Similarly to the previous examples, we assume $q=4$. For  $m=16$ we have $\widetilde m=18$ and the distance of the code $C_{16}$ is the same as  that of $C_{18}$, namely $d(16)=d(18)=8=w(x^2)$ (Theorem \ref {basePERparole2} or Example \ref{esempio3}).\\
By Theorem \ref{curvagenericamm} the minimum codewords of $C_{18}$ are supported on complete intersection $D$ of $\He$ and a curve $\xx$  defined by a polynomial $F$ with $\LM_\prec (F)=x^2$.   By Proposition~\ref{fondamentale} we see that these divisors $D$ belongs to $\MM_{16}$. \\
On the other hand this same result allows the possibility that there are divisors $D\in \MM_{16}\setminus \MM_{18}$. \\
The monomials  $x^u y^v\in \B$ with $16+1 \leq w(x^u y^v) <18+1$ are $x^3 y$ and $x^2 y^2$.
Every divisor $D$ such that $x^2 y^2\in \cN(\In_\prec(I_D))$ has  degree at least $9$,  hence it does not belong to $\MM_{16}$. \\
Then assume that $x^3 y\in \cN(\In_\prec(I_D))$, so that  $I_D=(F,G)$ with $\LM_\prec(F)=x^4$ and $\LM_\prec(G)=y^2$.  For instance we can obtain  divisors  of this type  in the following way: every line $y-\beta$ with $\beta \in \FQ$, $\beta \neq 0$  intersect $\He$ in $5$ points $(\alpha_i, \beta) $ with   $\alpha_i\in \FQ$.  If $\beta^4+\beta=\gamma^4+\gamma$ then  also $(\alpha_i, \gamma) \in \He$; the polynomials  $F=\Pi (x-\alpha_i)$ and  $G=(y-\beta)(y-\gamma)$ define two curves as wanted.\\
Note that the divisors that are  support of minimum-weight codewords for  $C_{17}$ are so also for $C_{16}$, but the converse is not true in general. Indeed, we have $\MM_{16}\supsetneq \MM_{17}=\MM_{18}$.
\end{example}

\begin{theorem}\label{curvagenericarifatto} 
Let $C_m$ be an  Hermitian with $q^2-q-1\leq m \leq 2q^2-2q-3$ and $m<\widetilde m$, so that $\widetilde \delta_m=\mu q =w(x^\mu )$. \\
A  $\FQ$-divisor $D$ over the Hermitian curve $\He$  is the support of minimum weight codewords of $C_m$ if and only if it is a  complete intersection  of either of the following types:
\begin{enumerate}
\item[(i)]\label{curvagenerica_i} $D=\He\cap \xx$ with $\xx$  the curve defined by a polynomial $F$  such that $\LM_\prec (F)=x^\mu $;

\item[(ii)]\label{curvagenerica_alphaq}     $m=({\mu+q-3})(q+1)+1$  with $1\leq \mu \leq q-1$ and  $D=\xx \cap \y$  with $\xx$, $\y$  curves defined by polynomials $F_1$ and $F_2$ such that $\LM_\prec (F_1)=x^q$ and $\LM_\prec (F_2)= y^{\mu}$. 
\end{enumerate}
\end{theorem}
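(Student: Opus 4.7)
The plan is to handle the two implications separately, following the same scheme as the proof of Theorem \ref{curvagenericamm} but adapting it to the new situation $m < \widetilde m$ where an extra family of supports appears. A useful simplification throughout is that $m \geq q^2 - q - 1$ puts every weight in the window $[m+1, m+q+1]$ strictly beyond the largest gap $q^2 - q - 1$ of $\Lambda$; hence each such weight belongs to $\Lambda$ and determines a unique canonical monomial $x^u y^v$ with $u \leq q$.

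For the sufficiency, in case (i) I would replace $F$ by its reduction modulo $H$ so that $\partial_x F \leq q$, apply Proposition \ref{gradox} to obtain $|D| = \mu q = d(m)$ together with $\In_\prec(I_D) = \langle x^{q+1}, x^\mu, y^q \rangle$, and exhibit $x^{\mu-1} y^{q-1}$ as a witness of $w$-weight $\mu q + q^2 - q - 1$; the bound $1 \leq t \leq q - \mu$ places this weight in $[m+1, m+q+1]$, and Proposition \ref{fondamentale} closes the case. In case (ii) I would use that $\LM(F_1) = x^q$ and $\LM(F_2) = y^\mu$ are coprime, so $\{F_1, F_2\}$ is automatically a Gr\"obner basis and $\dim_{\FQ} A/\langle F_1, F_2\rangle = q\mu$; combined with $|D| = q\mu$ this forces $I_D = \langle F_1, F_2\rangle$, and the witness $x^{q-1} y^{\mu-1}$ has $w$-weight exactly $m+1$ precisely when $t = 1$.

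For the harder necessity direction, I would enumerate all candidate witnesses. Writing $m = (\mu + q - 3)(q+1) + t$ and setting $j := t + k - 2$, a direct computation identifies the canonical monomial of weight $m+k$ as $x^{q-1-j} y^{\mu-1+j}$ in the non-wrapped range $k \leq q - t + 1$, and as $x^{q+1-k'} y^{\mu+k'-2}$ with $k' = k - q + t - 1$ in the wrapped range $k \geq q-t+2$. The central estimate compares $|D| = q\mu$ with the lower bound on $|\cN(\In_\prec(I_D))|$ coming from divisibility and Lemma \ref{comodo}. In the non-wrapped range with $v_k < q$ this lower bound is $(q-j)(\mu+j)$, and the inequality $(q-j)(\mu+j) \leq q\mu$ rearranges to $j(q - \mu - j) \leq 0$: in the admissible range $0 \leq j \leq q - \mu$ this forces $j = 0$ (case (ii), which additionally requires $t = 1$) or $j = q - \mu$ (case (i)). When $v_k \geq q$, Lemma \ref{comodo} adjoins the divisors of $x^q y^{v_k - q}$ and the total becomes $(q+1)\mu - q + j$, which exceeds $q\mu$ since $j > q - \mu$; the wrapped range is excluded by the analogous count $(q+2-k')(\mu+k'-1) > q\mu$ valid for $1 \leq k' \leq t \leq q - \mu$.

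Once only the two shapes $\In_\prec(I_D) = \langle x^\mu, y^q\rangle$ and $\In_\prec(I_D) = \langle x^q, y^\mu\rangle$ survive, the equality $|D| = q\mu$ makes the sous-\'escalier coincide with the divisor set of the witness, so I would read off a Gr\"obner basis element $F$ with $\LM(F) = x^\mu$ in case (i), or a pair $F_1, F_2$ with the required leading monomials in case (ii). Proposition \ref{gradox} then gives the desired equality $D = \He \cap \xx$ in case (i), while in case (ii) the complete intersection $\xx \cap \y$ of the two curves has the same degree as $D$ and contains it, so they coincide. The main obstacle will be the uniform treatment of the divisor counts: one must juggle the non-wrapped and wrapped regimes and track the jump contributed by Lemma \ref{comodo} exactly when $v_k$ crosses the threshold $q$, in order to rule out any intermediate witness that would produce a third admissible shape for $\In_\prec(I_D)$.
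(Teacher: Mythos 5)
Your proposal is correct: the core mechanism (force a witness monomial of $w$-weight in $[m+1,m+q+1]$ into $\cN(\In_\prec(I_D))$ via Proposition \ref{fondamentale}, bound $|D|$ from below by counting divisors of that witness, conclude that only the witnesses $x^{\mu-1}y^{q-1}$ and $x^{q-1}y^{\mu-1}$ are compatible with $|D|=\mu q$, and then recover the complete intersection from the forced initial ideal by comparing degrees) is exactly the one used in the paper. The difference is organizational. The paper first invokes Corollary \ref{contiene} and Theorem \ref{curvagenericamm} to get $\MM_{\widetilde m}\subseteq\MM_m$ with the type (i) divisors accounted for wholesale, so that for the necessity it only needs to examine a divisor in $\MM_m\setminus\MM_{\widetilde m}$, whose witness is confined to the short window $[m+1,\widetilde m]$; these witnesses are exactly the monomials $x^{q-i}y^{\mu+i-2}$, $i=1,\dots,q-\mu$, none of which wraps past $x$-exponent $0$ or has $y$-exponent $\geq q$, so Lemma \ref{comodo} and your ``wrapped range'' never enter. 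You instead sweep the full window $[m+1,m+q+1]$, which obliges you to track the regime $v_k\geq q$ (where Lemma \ref{comodo} contributes the extra $(j+1)(\mu+j-q)$ divisors) and the wrapped monomials $x^{q+1-k'}y^{\mu+k'-2}$; in effect you are re-deriving inside this proof the counting already packaged in Theorem \ref{basePERparole}. Your bookkeeping checks out (in particular $(q-j)(\mu+j)=q\mu+j(q-\mu-j)$ correctly isolates $j=0$ and $j=q-\mu$, and the totals $(q+1)\mu-q+j$ and $(q+2-k')(\mu+k'-1)$ do exceed $q\mu$ in their respective ranges), so the argument is complete and self-contained; what the paper's reduction buys is brevity and no case splitting, while yours makes the proof independent of Theorem \ref{curvagenericamm} for the necessity direction. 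One small point worth making explicit in your write-up: the restriction to \emph{canonical} witnesses with $x$-exponent at most $q$ is legitimate because $x^{q+1}=\LM_\prec(H)\in\In_\prec(I_D)$ for every divisor $D$ on $\He$.
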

 \begin{proof}
 By Theorem \ref{basePERparole2} and Corollary \ref{contiene}  we have $d(m)=d(\widetilde m)=\mu q$ and  $\MM_{\widetilde m}\subseteq \MM_m$. Then, by Theorem \ref{curvagenericamm} all the divisors in {\rm{(i)}} are the support of codewords of $C_m$.   Therefore, it is sufficiente to prove that   $\MM_{\widetilde m}= \MM_m$  except for $m=(\mu +q-3)(q+1)+1$ and that in this case the divisors in $\MM_m\setminus \MM_{\widetilde m}$ are those described in {\rm{(ii)}}.\\
Let $m=(\mu+q-3)(q+1)+1$  and $D\in \MM_m\setminus \MM_{\widetilde m}$. By Proposition \ref{fondamentale} and Theorem~\ref{curvagenericamm}  we know that $\cN(\In_\prec(I_D))$  contains one of the   monomials with $w$-weight between $m+1$ and $\widetilde m$, namely  of the type    $x^{q-i}y^{\mu+i-2}$ with $i=1, \dots, q-\mu$.\\
Each of these monomials, except the first one, has  more than  $\mu q$ factors, hence $D\notin \MM_m$  if $x^{q-i}y^{\mu+i-2}\in \cN(\In_\prec(I_D))$ for some $i>1$.      The first monomial $x^{q-1}y^{\mu-1}$ has exactly $\mu q $ factors and its $w$-weight is  $m+1$, so that the corresponding  divisors do not belong to $\MM_{m'}$ for every $m'>m$. \\
Moreover, we obtain a divisor $D$ of degree $\mu q$  such that $x^{q-1}y^{\mu-1}\in \cN(\In_\prec(I_D))$ only if $\cN(\In_\prec(I_D))$  is precisely the set of $\mu q$  monomials that divide $x^{q-1}y^{\mu-1}$. Therefore, $ x^q, y^\mu\in \In_\prec(I_D)$. If $F_1$ and $F_2$ are polynomials in $I_D$ such that $\LM_\prec (F_1)=x^q$ and $\LM_\prec (F_2)= y^{\mu}$, then they generate $I_D$.\\
Note that   divisors $D$ as above do exist; for instance we can choose $q$ distinct elements  $\alpha_i\in \FQ$ and $\mu $ distinct elements $\beta_j\in \FQ$ such that $tr(\beta_j)=N(\alpha_i)$ for every $i,j$ and take  $F_1=\Pi_{i=1}^q (x-\alpha_i)$ and $F_2=  \Pi_{j=1}^\mu (y-\beta_j)$.
 \end{proof}

\section{Comparison with the known results}\label{sec.6}

As already underlined the results we obtain about the distance of the Hermitian codes, though formulated in a different way, agree with previous results: \bcR in 1988, Stichtenoth \cite{stichtenoth1988note}, finds a formula for the distance $d$ of $C_m$ for any $m > q^2-q-2$. A few years later,  Yang and Kumar \cite{yang1992true}   bring to completion Stichtenoth work finding  the distance of  the remaining codes  $C_m$.
In 1995, the adoption of linear algebra and the theory of semigroups
allowed Kirfel and Pellikaan \cite{kirfel1995minimum} to obtain a different and much shorter proof of the results in  \cite{yang1992true},
somehow closer to the approach presented in this manuscript. \ecr 

Furthermore, there is a partial overlap with known results for what  concerns the geometric description of minimum weight codewords. In particular, Marcolla, Pellegrini and Sala \cite{CGC-cd-art-marcolla2015small} find a geometric description for the codes in the I phase, while Ballico and Ravagnani \cite{CGC-cod-art-ballico2012geometry} find it for a few codes of the II phase. 
We state and prove again  the known results    for sake of completeness and to highlight the simplicity of our method.   Moreover,  the different description we propose for the divisors  $D\in \MM_m$  shows that also for the already know cases they are complete intersection. 

We now compare our results with the know ones.

\paragraph{I phase} In \cite{CGC-cd-art-marcolla2015small}, the authors consider Hermitian codes $C_m$ with $d(m)\leq q$, that is, $m\leq q^2-2$. 
The result obtained in \cite{CGC-cd-art-marcolla2015small} is  complete, but the description of minimum weight codewords is  slightly different from ours. For instance in Corollary 1 of \cite{CGC-cd-art-marcolla2015small} the authors describe this support for some codes that they call \textit{edge codes}   in the following way 
\lq\lq \textit{The support of a minimum-weight codeword lies in the intersection of the Hermitian curve $\He$ and a vertical line}.\rq\rq\\
%
%
  
%

The  edge codes $C_m$ with  $q^2-q\leq m\leq q^2-2$ appear among those considered in Sections 4 and 5. Specializing   our general results we get  that the distance of such a code $C_m$ is $q$ and that       $D\in \MM_m$   if and only if   $D\subset E$ and 
it is  the complete intersection of $\He$  and a curve $\xx$    defined by a polynomial $F$ such that $LM(F)= x$; $\xx$  is indeed the  vertical line of  \cite[Corollary 1]{CGC-cd-art-marcolla2015small} that we mention above.\\

A tricky case is that of the codes $C_m$ called \textit{corner codes} in  \cite{CGC-cd-art-marcolla2015small}, where it is proved that the set of points 
\lq\lq  \textit{corresponding to minimum-weight codewords lies on a same line}.\rq\rq

The corner code with  $m={q^2-q-1}$ is one of those considered in Theorem~\ref{curvagenericarifatto} {\rm{(ii)}}; in fact $m=({\mu+q-3})(q+1)+1$  with $\mu=1$.  However,  in our description  divisors   on    vertical lines and divisors    on  non-vertical lines come from the two different families described in  either (i) or (ii) of  Theorem~\ref{curvagenericarifatto} in terms of  initial  ideal. Indeed,    if  $D\subset E$, then $D\in \MM_{q^2-q-1}$    if and only if it is of either  type :  
\begin{itemize}
\item[(i)]    $D=\mathcal L\cap \He$ with $\In_\prec(I_D)= \langle x, y^q\rangle$  ($\mathcal L$ a vertical line)
\item[(ii)]    $D=\mathcal L'\cap \xx $   with $\In_\prec(I_D)= \langle y, x^q\rangle$  ($\mathcal L'$ a non-vertical line).
\end{itemize}
\ecr


\paragraph{II phase}   
For the codes of the  II phase we compare our results with  \cite[Theorem 19]{CGC-cod-art-ballico2012geometry}, that only concerns the codes $C_m$ with $q^2 < m\leq q^2+q$ corresponding to  codes having distance  $2q$,  $2q+1$ and $2q+2$.
We compare the two descriptions of the divisors $D\in\MM_m$.
%
In \cite[Theorem 19]{CGC-cod-art-ballico2012geometry} it is proved that
\begin{itemize}
\item[-] if the distance is either $2q+2$ or $2q$, then  $D\in\MM_m$ if and only if $D\subseteq E$ and it  is contained into a conic of $\mathbb{P}^2$.
\item[-] if $d=2q+1$, then  $D\in\MM_m$ if and only if $D\subset E$ and  $P_\infty\cup D$ lie on a conic of $\mathbb{P}^2$.
\end{itemize}
This description is substantially equivalent to ours,   where the the condition about $P_\infty$ is replaced by a condition on the leading monomials of the  polynomial that defines the conic. 

However, in  \cite[Theorem 19]{CGC-cod-art-ballico2012geometry} the case  with distance $2q$ and $a=q$ (in our notation  is the code $C_{q^2}$) is missing.  Indeed, this is another tricky case  of Theorem~\ref{curvagenericarifatto} {\rm{(ii)}} similar to the one quoted above; in fact $m=({\mu+q-3})(q+1)+1$  with $\mu=2$. For these codes,   the divisors  support of minimum weight codewords are of two different types.
In particular,   $D\in\MM_m$ if and only if $D\subset E$ and it is either   the complete intersection of $\He$  and a curve $\xx$   defined by a polynomial $F$ such that $LM(F)=x^2$ or $D$ is the complete intersection of the curves defined by two polynomials $F_1$ and $F_2$ such that $\LM_\prec (F_1)=x^q$ and $\LM_\prec (F_2)= y^{2}$.   

\bcR
\begin{remark}
	 Note that also for two-points Hermitian codes there are geometrical descriptions
	 regarding minimum weight codewords. In particular, in 2010, using
	 a method based on \cite{kirfel1995minimum}, Park \cite{park2010minimum} finds the minimum distance of two-points Hermitian codes and
	 obtains a geometrical characterization of minimum weight codewords as multiplications
	 of conics and lines. Three years later Ballico and Ravagnani \cite{ballico2013dual} using different techniques
	 describe the minimum-weight codewords of some two-points Hermitian codes through an
	 explicit geometric characterization of their supports.
\end{remark}
\ecr
\section{Conclusion}\label{sec.con}

With this paper the authors conclude the analysis of the supports of minimum weight codewords of the Hermitian codes,  obtaining the general result (Theorem A):

\textit{ For any Hermitian code $C_m$,  the support of every minimum-weight codeword is a  complete intersection of two plane curves. }

In the previous paper \cite{ChiaraMargherita-fase3e4-parmin} it is proved that the distance of every code $C_m$ with $m\geq 2q^2-2q-2$  (III and IV phase) is given by $d(m)=m-q^2+q+2$, hence there is a $1-1$ correspondence between codes and distances; moreover the distance of every code can be written as     $d(m)=\mu q+\lambda (q+1)$ and a set of $d(m)$ distinct $\FQ$-points $D$   is  the support  of     minimum weight codewords  of $C_m$ if and only if $D$ is cut on $\He$ by a curve $\xx$ defined by a polynomial $F$ with  leading monomial $x^\mu y^\lambda$ w.r.t. the \texttt{DegRevLex}.

In the present paper the codes $C_m$ with $m \leq 2q^2-2q-3$ (I and II phase) are considered.  There are a few    substantial  differences with respect to the codes with higher distance. The first one is that    several codes may  have  the same distance. A second difference is that the support of  minimum weight codewords of some codes $C_m$   are   complete intersections of two curves in the affine plane, but are not complete intersection in $\He$.  This happens for almost all the codes in the I phase and for some special code in the II phase

\begin{enumerate}
	\item[-]  in the I phase   the supports of minimum weight codewords of a code  $C_m$ are sets  of $d(m)$ points   on a line;   when $d(m)<q$, they cannot be obtained cutting $\He$ by a single curve, though they are complete intersection of a line and a curve of degree $d(m)$   (see \cite[Corollary~1,Proposition~2]{CGC-cd-art-marcolla2015small} or Section~\ref{sez.phaseI}).
	
	\item[-]  in the II phase,   if  $m\equiv 1 \bmod q+1$ there are two different \lq\lq families \rq\rq of  divisors on $\He$ that are support of minimum weight codewords of $C_m$. Indeed, there is an integer $\mu$, $1\leq \mu \leq q-1$,  such that  $m=(\mu+q-3)(q+1)+1$ and the distance is $d(m)=\mu q$. The support of a minimum weight codeword can be either the divisor cut on $\He$ by a curve $\xx$ given by a polynomial $F$ with $\LM_\prec (F)=x^\mu$ or the complete intersection of two curves $\xx$ and $\y$   defined by polynomials $F_1$ and $F_2$ such that $\LM_\prec (F_1)=x^q$ and $\LM_\prec (F_2)= y^{\mu}$ (see Theorem~\ref{curvagenericarifatto} {\rm{(ii)}}).
\end{enumerate}

\ecr


\section*{Acknowledgements}
\label{ack}

The authors would like to thank the anonymous referees for their interesting and useful comments which permit us to improve our presentation.

\bibliographystyle{amsplain}

\bibliography{BibChiMarg_giugno2018}

\end{document}